\documentclass[11pt,a4paper]{article}
\usepackage[utf8]{inputenc}
\usepackage[T1]{fontenc}
\usepackage[english]{babel}
\usepackage{lmodern}
\usepackage{amsmath,amssymb,amsthm,mathtools}
\usepackage{geometry}
\usepackage{enumitem}
\usepackage[hypertexnames=false,hidelinks]{hyperref}
\newtheorem{theorem}{Theorem}[section]
\newtheorem{lemma}[theorem]{Lemma}
\newtheorem{proposition}[theorem]{Proposition}
\newtheorem{corollary}[theorem]{Corollary}
\theoremstyle{definition}

\theoremstyle{remark}
\newtheorem{remark}[theorem]{Remark}
\newtheorem{example}[theorem]{Example}
\newtheorem*{introremark}{Remark}
\newtheorem*{introproposition}{Proposition}
\newtheorem*{introdefinition}{Definition}
\newtheorem*{maintheorem}{Main Theorem}

\newcommand{\E}{\mathbb E}
\newcommand{\R}{\mathbb R}
\newcommand{\F}{\mathbb F}
\newcommand{\D}{\mathcal D}

\newcommand{\1}{\mathbf 1}
\newcommand{\Ker}{\operatorname{Ker}}

\newcommand{\rank}{\operatorname{rank}}

\title{\textbf{A noncommutative Kalman condition for\\
null controllability of backward stochastic parabolic systems}}
\author{Mohamed Fadili\\[0.35em]
\small Laboratoire de Mathématiques, Modélisation et Systèmes Automatiques,\\
\small École Normale Supérieure, Cadi Ayyad University,\\
\small BP 2400, Marrakesh, Morocco\\
\small \texttt{m.fadili@uca.ma}\\
\small \href{https://orcid.org/0000-0003-4285-7793}
{ORCID: 0000-0003-4285-7793}}
\date{}

\begin{document}
\maketitle

\begin{abstract}
We study null controllability of backward stochastic heat systems with
constant couplings in both the state and the martingale integrand, common
scalar diffusion, and a localized drift control. We prove that null
controllability is equivalent to a noncommutative word-rank condition on the
two coupling matrices and the control matrix. The proof combines a
factorization of the dual dynamics, positivity and small-time coercivity of
a finite-dimensional stochastic Gramian, and a Lebeau--Robbiano spectral
argument. Failure of the rank condition yields an invariant unobservable
subspace.
\end{abstract}

\textbf{Keywords.} Backward stochastic parabolic equation; null
controllability; observability; noncommutative Kalman condition; stochastic
Gramian; Lebeau--Robbiano method.

\textbf{MSC 2020.} 93B05, 93B07, 93E20, 35K10.

\section*{Introduction}

Controllability of coupled parabolic systems by fewer controls than state
components is governed, in the deterministic constant-coefficient setting,
by Kalman-type algebraic conditions. For stochastic parabolic systems, the
martingale integrand is an additional unknown and a coupling through that
variable produces mechanisms which are absent from deterministic systems.

In earlier work, the author proved null controllability for a backward
stochastic cascade system in which observation is transferred by a
nonvanishing subdiagonal coupling in the state equation~\cite{FadiliCascade}.
A subsequent two-equation result with Maniar showed that the coupling of the
martingale integrands can also transfer observation under a signed local
nondegeneracy condition, even for space--time dependent
coefficients~\cite{FadiliTwoState}. The present paper brings these two
mechanisms into a common framework for arbitrary constant matrices. In this
setting, the earlier sufficient structural assumptions are replaced by an
exact algebraic characterization.

Existing results based on the Lebeau--Robbiano method also address forward
controlled stochastic coupled systems~\cite{Liu2014,LiuLiu}. Their dual
equations are backward and contain an additional unknown martingale
integrand. This is structurally different from the problem considered here:
the controlled system is backward, while its dual is a forward equation
with the explicit multiplicative-noise coefficient \(-A_2^*z\). Thus the
negative conclusions concerning classical Kalman conditions in some
forward controlled settings do not apply directly to the present system.

Recent Carleman approaches treat broad classes of forward and backward
stochastic parabolic equations with first- and zero-order
couplings~\cite{BarounEtAl2025}, and cascade systems of backward equations with one
distributed control~\cite{BouliteCascade2025}. Two coupled backward
fourth-order equations with one drift control have also been treated by a
dedicated global Carleman estimate~\cite{Wang2024}. These results allow
space-dependent lower-order coefficients and, in the cascade case, general
parabolic principal parts. Our assumptions are narrower analytically
(constant matrices and a common scalar diffusion), but they allow arbitrary
noncascade couplings in both \(y\) and \(Y\), and lead to an exact algebraic
characterization rather than a structural cascade hypothesis.

The relevant finite-dimensional condition is not the classical Kalman rank
condition associated with a single matrix. It involves all words in the
drift and martingale-coupling matrices. Equivalently, it asks the columns of
the control matrix to generate the state space as a module over the
associative algebra jointly generated by these two matrices.

For finite-dimensional linear backward stochastic equations, this
word-rank criterion originates in the exact-controllability theory of
Peng~\cite{Peng1994} and is presented in a systematic form by Lü and
Zhang~\cite{LuZhang}. Gramian and Kalman criteria have also been developed
for finite-dimensional mean-field stochastic systems~\cite{Yu2021}.
The finite-dimensional statement concerns exact controllability at the
initial time of a BSDE, whereas the present statement concerns null
controllability of a spatially distributed backward equation by a control
supported only in \(G\). Applying the finite-dimensional criterion
separately to each Laplace mode does not settle the parabolic problem. One
must still construct a single localized control and obtain estimates
uniform in the mode. We prove that the same algebraic criterion remains
necessary and sufficient after spatial localization, together with the
quantitative bounds needed to pass from finite spectral spaces to the full
equation.

Our proof does not seek a simultaneous cascade form, which need not exist
for two noncommuting matrices. Instead, the dual equation factorizes into a
finite-dimensional stochastic flow and the scalar heat semigroup. This
leads to a stochastic internal Gramian whose kernel is exactly the
unobservable module. Its small-time coercivity is polynomial, which is the
quantitative input required by the Lebeau--Robbiano method.

An alternative route would start from a componentwise vectorial Carleman
estimate and try to propagate the localized observation successively from
\(B^*z\) to \((w(A_1,A_2)B)^*z\) for all required words. Drift letters can
in principle be generated by localized integrations by parts, whereas
noise letters require quadratic-variation identities. For arbitrary
noncommuting words, closing such a propagation lemma entails substantial
commutator and localization bookkeeping. The common scalar diffusion
assumption offers a more direct mechanism: factorization makes the internal
Gramian independent of the Laplace eigenvalue, and the only modal
dependence is the explicit scalar factor \(e^{-d\lambda_k t}\). Its
polynomial small-time coercivity supplies the uniform modal estimate
required by the spectral argument. We therefore use the
Lebeau--Robbiano method here and leave the Carleman approach for future work.
Recent abstract spectral and telegraph-series methods establish
observability for classes of backward stochastic evolution
equations~\cite{LiuWuYangZhong2026}. Their observed evolution is backward and their
applications concern scalar degenerate, fourth-order, and heat equations.
The present dual evolution is instead forward, and the additional issue is
to identify exactly when one matrix observation sees every component of a
coupled system.

The spectral iteration itself is standard. The new points are the necessary
and sufficient noncommutative word condition, its identification with the
kernel of the internal stochastic Gramian, and the quantitative estimate
that converts this finite-dimensional condition into localized parabolic
observability.

\subsection*{Setting and statement of the main theorem}

Let \(T>0\), let \(\D\subset\R^N\) be a bounded connected domain with
\(C^2\) boundary, and let \(G\subset\D\) be a nonempty open subset. Set
\[
Q=(0,T)\times\D,\qquad \Sigma=(0,T)\times\partial\D.
\]
Let \((\Omega,\mathcal F,\F,\mathbb P)\) carry a one-dimensional Brownian
motion \(W\), and assume that \(\F\) is its augmented natural filtration.
We consider
\begin{equation}
\begin{cases}
dy=\bigl[-d\Delta y+A_1y+A_2Y+B\1_Gv\bigr]dt+Y\,dW_t
       &\text{in }Q,\\
y=0&\text{on }\Sigma,\\
y(T)=y_T&\text{in }\D,
\end{cases}
\tag{I.1}
\end{equation}
where \(d>0\),
\[
A_1,A_2\in\R^{n\times n},\qquad B\in\R^{n\times m},
\]
are constant deterministic matrices.

\begin{introremark}
\textbf{Geometric input.}
Beyond standard variational well-posedness, the only geometric ingredient
used below is the Dirichlet spectral inequality
\[
\|f\|_{L^2(\D)}^2
\le C e^{C\sqrt\Lambda}\|f\|_{L^2(G)}^2,
\qquad f\in E_\Lambda.
\]
The bounded connected \(C^2\) setting is a standard framework for this
inequality. More generally, the proof applies verbatim to any
self-adjoint scalar diffusion with compact resolvent for which the same
spectral inequality and the corresponding high-frequency dissipation hold.
This observation does not cover a nonscalar matrix diffusion, which fails
the factorization established below.
\end{introremark}

\begin{introdefinition}
System (I.1) is null controllable at time \(T\) if, for every
\[
y_T\in L^2_{\mathcal F_T}(\Omega;L^2(\D)^n),
\]
there exists
\[
v\in L^2_{\F}(0,T;L^2(G)^m)
\]
such that the corresponding solution satisfies \(y(0)=0\) in
\(L^2(\D)^n\), almost surely.
\end{introdefinition}

Let \(\mathcal W(A_1,A_2)\) be the family of all finite words in
\(A_1,A_2\), including the empty word \(I_n\), and define
\begin{equation}
\mathcal R(A_1,A_2,B)
=\operatorname{span}\{w(A_1,A_2)B\eta:
w\in\mathcal W(A_1,A_2),\ \eta\in\R^m\}.
\tag{I.4}
\end{equation}

\begin{maintheorem}
For every \(T>0\), system (I.1) is null controllable if and only if
\begin{equation}
\mathcal R(A_1,A_2,B)=\R^n.
\tag{I.5}
\end{equation}
Moreover, if (I.5) holds, the control can be chosen so that
\begin{equation}
\E\int_0^T\|v(t)\|_{L^2(G)^m}^2\,dt
\le C_T\E\|y_T\|_{L^2(\D)^n}^2.
\tag{I.6}
\end{equation}
For \(0<T\le1\), the constant may be chosen so that
\begin{equation}
C_T\le C\exp\!\left(\frac CT\right).
\tag{I.7}
\end{equation}
\end{maintheorem}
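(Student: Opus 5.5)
We argue by duality. Null controllability of (I.1) with the bound (I.6) is equivalent to an observability inequality for the forward adjoint system
\[
dz=\bigl[d\Delta z-A_1^*z\bigr]dt-A_2^*z\,dW_t,\qquad z=0\text{ on }\Sigma,\qquad z(0)=z_0,
\]
namely
\[
\E\|z(T)\|^2_{L^2(\D)^n}\le C_T\,\E\int_0^T\|B^*z\|^2_{L^2(G)^m}\,dt .
\]
The duality identity is obtained by applying It\^o's formula to \(\langle y,z\rangle\); the signs must be checked against (I.1). Because the diffusion is the common scalar \(d\Delta\), and \(\Delta\) commutes with the constant matrices, the solution factorizes as
\[
z(t)=e^{td\Delta}\Phi(t)^*z_0 .
\]
Here \(\Phi(t)^*\) is the \(n\times n\) matrix stochastic flow of \(dX=-A_1^*X\,dt-A_2^*X\,dW\), which is an explicit linear SDE. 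Projecting onto a Dirichlet eigenfunction \(e_k\) gives the modal factor \(e^{-d\lambda_k t}\), multiplying the finite-dimensional flow.

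\textbf{Algebraic step.} Introduce the internal stochastic Gramian
\[
\mathcal G_T=\E\int_0^T\Phi(t)BB^*\Phi(t)^*\,dt ,
\]
understood as a quadratic form on deterministic \(\xi\in\R^n\), with \(\xi\mapsto\E\int_0^T|B^*\Phi(t)^*\xi|^2dt\). We will show that its kernel is exactly \(\mathcal R(A_1,A_2,B)^\perp\).

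For one inclusion, suppose \(\xi\perp\mathcal R\). Then \(\mathcal R^\perp\) is invariant under \(A_1^*\) and \(A_2^*\), so the flow stays in \(\mathcal R^\perp\) and \(B^*\Phi^*\xi\equiv0\).

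For the converse, suppose \(B^*X(t)=0\) a.s. for a.e. \(t\), where \(X=\Phi^*\xi\). Taking the martingale part of \(d(B^*X)\) gives \(B^*A_2^*X=0\), and the drift part gives \(B^*A_1^*X=0\). Iterating, and using the uniqueness of the semimartingale decomposition, yields \(B^*w(A_1,A_2)^*X(t)=0\) for all words \(w\). Evaluating at \(t=0^+\) by continuity gives \(\xi\perp\mathcal R\).

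This also gives necessity. If (I.5) fails, pick \(0\neq\xi\in\mathcal R^\perp\) and set \(z_0=\xi e_1\). Then \(B^*z\equiv0\) while \(z(T)\neq0\), so observability fails, and hence so does null controllability.

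For the quantitative form, we need polynomial small-time coercivity:
\[
\xi^*\mathcal G_t\,\xi\ge c\,t^{2K+1}|\xi|^2 \qquad\text{for } 0<t\le1 ,
\]
where \(K\) is the word length needed to span \(\R^n\). To prove it, expand \(B^*\Phi(s)^*\xi\) in an It\^o--Taylor (Wiener chaos) expansion:
\[
B^*\Phi(s)^*\xi=\sum_{|w|\le K} c_w(s)\,B^*w^*\xi+\text{remainder}.
\]
The coefficients are iterated integrals \(\int ds\cdots dW\). Distinct words give \(L^2(\Omega\times(0,t))\)-orthogonal (or at least uniformly nondegenerate) iterated integrals, with norms scaling like \(t^{(\#\text{dt}+\#\text{dW}/2)}\). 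A scaling argument then gives the polynomial bound.

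\textbf{Main obstacle.} This coercivity is where I expect the real difficulty. The iterated integrals for different words are not exactly orthogonal, since \(ds\) and \(dW\) integrals mix at different chaos orders. The remainder must also be controlled uniformly in \(\xi\).

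The first thing I would try is a compactness/scaling argument. Rescale time \(s=t\sigma\) and Brownian motion \(W_{t\sigma}=\sqrt t\,\tilde W_\sigma\). This turns the flow into that of \((tA_1,\sqrt t A_2)\). The claim then reduces to nondegeneracy of a family of Gramians whose lowest-order terms in \(t\) are governed by the word structure. The bound follows by a finite-dimensional argument over the unit sphere in \(\xi\), together with the kernel characterization applied to the lower-order terms.

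\textbf{Spectral step (Lebeau--Robbiano).} For data \(z_0\) with modes \(\lambda_k\le\Lambda\), combine three ingredients:
\begin{itemize}
\item the factorization;
\item the spectral inequality, applied pathwise and in time to \(\sum_k e^{-d\lambda_k t}\langle\Phi^*z_{0},e_k\rangle e_k\), which is useful because \(\Phi^*\) acts on the vector index and not on \(k\);
\item Gramian coercivity, applied modewise with the factor \(e^{-2d\lambda_k t}\ge e^{-2d\Lambda t}\).
\end{itemize}
Together these give a partial observability cost of
\[
C e^{C\sqrt\Lambda}\,\tau^{-(2K+1)}
\]
on an interval of length \(\tau\). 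Then iterate in the standard way:
\begin{itemize}
\item alternate control phases on dyadic intervals \(\tau_j\) with frequency cutoffs \(\Lambda_j=2^{2j}\);
\item follow each control phase by a free-decay phase, using the high-frequency dissipation \(e^{-d\Lambda_j\tau_j}\) of the backward equation.
\end{itemize}
The backward-in-time version must respect adaptedness. This is done by solving the high-frequency free backward equation and the low-frequency controlled one, both as BSPDEs with conditional expectations. Summing the phases yields (I.6), and the constant is \(C_T\le Ce^{C/T}\), since the polynomial factor \(\tau^{-(2K+1)}\) is absorbed into the exponential.
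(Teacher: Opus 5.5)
\textbf{Gap: small-time coercivity of the Gramian.} Your overall architecture matches the paper's: duality, the factorization \(z(t)=S(t)e^{td\Delta}z_0\), kernel of the internal Gramian equal to the unobservable module, necessity via \(z_0=\phi_1\xi\), and Lebeau--Robbiano. But the one quantitative input the spectral iteration cannot do without is not established, namely the polynomial small-time coercivity \(G_t\ge ct^q I_n\). The route you sketch for it does not close as written.

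Truncating the It\^o--Taylor expansion at ordinary word length \(|w|\le K\) leaves a remainder that contains the \((K+1)\)-fold iterated \(dW\)-integral. At time \(s\), its \(L^2(\Omega)\) size is of order \(s^{(K+1)/2}\). In the worst direction your main part is only of order \(s^K\), so the remainder is not \(o(s^K)\); it is of the same order for \(K=1\) and of larger order for \(K\ge2\).

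The rescaling/compactness step does not rescue this. Compactness of the unit sphere gives positivity of \(G_t\) for each fixed \(t\), but no rate as \(t\downarrow0\), and the rate is exactly what is needed. To repair your route you would need three things:
\begin{enumerate}
\item truncate at weighted degree \(\#dt+\tfrac12\#dW\le K\), so the remainder is \(O(s^{K+1/2})\);
\item replace orthogonality by linear independence of the rescaled iterated integrals on \((0,1)\), which follows from uniqueness of semimartingale decompositions exactly as in your kernel argument;
\item use norm equivalence on that finite-dimensional span.
\end{enumerate}
That would indeed give \(\lambda_{\min}(G_t)\ge ct^{2K+1}\).

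\textbf{The paper's shorter route.} By It\^o's formula, \(\E[S(t)^*QS(t)]=e^{t\mathcal L}Q\) with \(\mathcal L(Q)=C_0^*Q+QC_0+C_1^*QC_1\). Hence \(G_\tau=\int_0^\tau e^{t\mathcal L}(BB^*)\,dt\) is an entire function of \(\tau\). The kernel lemma gives \(\det G_\tau>0\) for every \(\tau>0\), so the first nonzero Taylor coefficient of \(\det G_\tau\) is positive and \(\det G_\tau\ge c\tau^r\) for small \(\tau\). Together with \(\|G_\tau\|\le C\tau\), this yields \(\lambda_{\min}(G_\tau)\ge c\tau^{r-n+1}\). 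This exponent can be worse than your \(2K+1\), but any polynomial loss is absorbed by the Lebeau--Robbiano iteration.

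\textbf{Bookkeeping in your spectral step.} The cost \(Ce^{C\sqrt\Lambda}\tau^{-(2K+1)}\) you display is correct only for the final-state estimate of \(\E\|z(\tau)\|^2\), obtained by comparing \(e^{-2d\lambda_kt}\) with \(e^{-2d\lambda_k\tau}\). That is also the estimate partial controllability actually requires. If, as your bullet suggests, you bound \(\|z_0\|^2\) using \(e^{-2d\lambda_kt}\ge e^{-2d\Lambda\tau}\), you pick up a factor \(e^{2d\Lambda\tau}\). That factor exactly cancels the dissipation gain of a free phase of equal length, and the iteration then fails. In that case you must shorten the control phase; the paper controls only on the rightmost quarter of each interval.
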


\begin{introremark}
Because the algebra generated by \(A_1,A_2\) is finite-dimensional,
condition (I.5) is a finite rank condition. One may use the recursive
word family
\[
M_{1,1}=A_1B,\quad M_{2,1}=A_2B,
\]
\[
M_{1,k+1}=(A_1M_{1,k},A_1M_{2,k}),\qquad
M_{2,k+1}=(A_2M_{1,k},A_2M_{2,k}),
\]
and stop as soon as the generated column space stabilizes.
\end{introremark}

\begin{introproposition}
\textbf{Finite rank test.}
Let
\[
\mathcal V_k
=\operatorname{span}\{w(A_1,A_2)B\eta:
|w|\le k,\ \eta\in\R^m\}.
\]
Then
\[
\mathcal V_{k+1}
=\mathcal V_k+A_1\mathcal V_k+A_2\mathcal V_k,
\qquad
\mathcal R(A_1,A_2,B)=\mathcal V_{n-1}.
\tag{I.8}
\]
Consequently, (I.5) is decidable by forming only the words of length at
most \(n-1\).
\end{introproposition}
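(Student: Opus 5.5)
The plan is to prove the recursion first and then derive the stabilization bound by a standard increasing-chain argument.

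\emph{Recursion.} Every word of length at most \(k+1\) is either of length at most \(k\) or of the form \(A_iw\) with \(|w|\le k\) and \(i\in\{1,2\}\). Hence each spanning vector \(w(A_1,A_2)B\eta\) with \(|w|\le k+1\) lies in \(\mathcal V_k+A_1\mathcal V_k+A_2\mathcal V_k\). Conversely, \(\mathcal V_k\subset\mathcal V_{k+1}\) trivially. By linearity, \(A_i\mathcal V_k\) is spanned by the vectors \(A_iwB\eta\) with \(|w|\le k\), and these are words of length at most \(k+1\). This gives the first identity in (I.8). Note that the recursion expresses \(\mathcal V_{k+1}\) as a function of \(\mathcal V_k\) alone, say \(\mathcal V_{k+1}=\Phi(\mathcal V_k)\) with \(\Phi(U)=U+A_1U+A_2U\).

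\emph{Stabilization.} The chain \(\mathcal V_0\subset\mathcal V_1\subset\cdots\) consists of subspaces of \(\R^n\), so \(\dim\mathcal V_k\) is nondecreasing and bounded by \(n\). If \(\mathcal V_{k_0+1}=\mathcal V_{k_0}\) for some \(k_0\), then \(\mathcal V_{k_0+2}=\Phi(\mathcal V_{k_0+1})=\Phi(\mathcal V_{k_0})=\mathcal V_{k_0+1}\). By induction \(\mathcal V_k=\mathcal V_{k_0}\) for all \(k\ge k_0\), and therefore \(\mathcal R=\bigcup_k\mathcal V_k=\mathcal V_{k_0}\), since every word has finite length.

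It remains to bound the first index \(k_0\) at which stabilization occurs. Strict inclusions occur exactly for \(k<k_0\), so \(\dim\mathcal V_{k_0}\ge\dim\mathcal V_0+k_0\). If \(B\neq0\), then \(\dim\mathcal V_0\ge1\), and since \(\dim\mathcal V_{k_0}\le n\) we get \(k_0\le n-1\). If \(B=0\), all \(\mathcal V_k=\{0\}\) and the claim is trivial. In either case \(\mathcal R=\mathcal V_{k_0}=\mathcal V_{n-1}\), using monotonicity and \(n-1\ge k_0\).

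\emph{Decidability and main point.} The decidability consequence follows because \(\mathcal V_{n-1}\) is the column space of the finite matrix obtained by concatenating \(wB\) over all words with \(|w|\le n-1\). Condition (I.5) is then the statement that this matrix has rank \(n\). The only subtle point is the stabilization step: it must use that \(\mathcal V_{k+1}\) depends only on \(\mathcal V_k\) through \(\Phi\), which is exactly what the recursion provides. There is no real obstacle here.
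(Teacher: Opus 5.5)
Your proposal is correct and follows essentially the same route as the paper. The recursion comes from splitting off the leftmost letter. Stabilization propagates because $\mathcal V_{k+1}$ is determined by $\mathcal V_k$, which the paper phrases as invariance of $\mathcal V_k$ under $A_1,A_2$. The bound $n-1$ comes from the dimension rising by at least one at each strict step starting from $\mathcal V_0=\operatorname{Ran}B$. Your separate treatment of the degenerate case $B=0$ is a small piece of care that the paper leaves implicit.
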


\begin{proof}
The recurrence is immediate from the definition. If
\(\mathcal V_{k+1}=\mathcal V_k\), then \(\mathcal V_k\) is invariant under
\(A_1,A_2\), so all subsequent spaces equal \(\mathcal V_k\). Otherwise
the dimension increases by at least one. Starting from
\(\mathcal V_0=\operatorname{Ran}B\), stabilization therefore occurs no
later than \(k=n-1\).
\end{proof}

Section~1 establishes well-posedness and develops the tools used later:
duality, factorization of the dual dynamics, the internal stochastic
Gramian, low-frequency observability, high-frequency dissipation, and the
spectral iteration. Section~2 proves both directions of the main theorem.
Section~3 examines pure drift transfer, pure martingale-coupling transfer,
the constant two-state case, and a genuinely mixed word. The final section
summarizes the result and discusses open extensions.

\section{Well-posedness and tools}

This section collects the analytic and algebraic ingredients used in the
proof: well-posedness, duality, factorization of the dual dynamics, the
internal stochastic Gramian, low-frequency observability, and high-frequency
dissipation.

\subsection{Well-posedness}

\begin{proposition}[Well-posedness]
For every
\[
y_T\in L^2_{\mathcal F_T}(\Omega;L^2(\D)^n),
\qquad
v\in L^2_{\F}(0,T;L^2(G)^m),
\]
system (I.1) has a unique variational adapted solution satisfying
\[
\begin{split}
y&\in L^2_{\F}\bigl(\Omega;C([0,T];L^2(\D)^n)\bigr)
 \cap L^2_{\F}(0,T;H_0^1(\D)^n),\\
Y&\in L^2_{\F}(0,T;L^2(\D)^n).
\end{split}
\]
Moreover,
\[
\E\sup_{0\le t\le T}\|y(t)\|^2
+\E\int_0^T\bigl(\|\nabla y(t)\|^2+\|Y(t)\|^2\bigr)dt
\le C_T\left(\E\|y_T\|^2+
\E\int_0^T\|v(t)\|_{L^2(G)^m}^2dt\right).
\]
\end{proposition}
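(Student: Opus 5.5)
We treat (I.1) as a linear backward stochastic evolution equation in the Gelfand triple \(V=H_0^1(\D)^n\subset H=L^2(\D)^n\subset V'\), with operator \(\mathcal A y=-d\Delta y\), and use the standard variational theory for BSEEs (Hu--Peng, Du--Meng). The coupling terms \(A_1y+A_2Y\) and the source \(B\1_Gv\) are bounded perturbations. The plan is a fixed-point (or Galerkin) argument together with Itô's formula for \(\|y(t)\|^2\).

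\textbf{Step 1 (uncoupled problem).} For given \(f\in L^2_\F(0,T;H)\), consider
\[
dy=[-d\Delta y+f]\,dt+Y\,dW,\qquad y(T)=y_T .
\]
Existence and uniqueness of \((y,Y)\) in the stated spaces follow from the classical result for backward stochastic parabolic equations. One can prove this by Galerkin approximation on Dirichlet eigenfunctions: each mode is a scalar linear BSDE solved by the martingale representation theorem, which is available because \(\F\) is the Brownian filtration. One then passes to the limit using the energy estimate of Step 2. Continuity of \(y\) in \(H\) comes from Itô's formula in the Krylov--Rozovskii form for the triple \((V,H,V')\).

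\textbf{Step 2 (a priori estimate).} Apply Itô's formula to \(e^{\beta t}\|y(t)\|^2\) on \([t,T]\). Taking expectations gives
\[
\E e^{\beta t}\|y(t)\|^2+\E\int_t^T e^{\beta s}\bigl(2d\|\nabla y\|^2+\|Y\|^2+\beta\|y\|^2\bigr)ds
=\E e^{\beta T}\|y_T\|^2-2\E\int_t^T e^{\beta s}\langle A_1y+A_2Y+B\1_Gv,y\rangle\,ds,
\]
with the sign arranged according to the backward direction. Young's inequality handles the coupling terms, for instance \(2|\langle A_2Y,y\rangle|\le \tfrac12\|Y\|^2+2|A_2|^2\|y\|^2\). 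Choosing \(\beta\) large absorbs every \(\|y\|^2\) term and leaves the bound for \(\sup_t\E\|y(t)\|^2\), \(\E\int\|\nabla y\|^2\) and \(\E\int\|Y\|^2\). To put the supremum inside the expectation, keep the stochastic integral \(2\int\langle y,Y\rangle dW\) in the pathwise identity and apply the Burkholder--Davis--Gundy inequality:
\[
\E\sup_t\Bigl|\int_t^T\langle y,Y\rangle\,dW\Bigr|\le \varepsilon\,\E\sup_t\|y\|^2+C_\varepsilon\,\E\int_0^T\|Y\|^2 .
\]
The \(\varepsilon\)-term is then absorbed.

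\textbf{Step 3 (coupled problem).} Define \(\Phi(\tilde y,\tilde Y)=(y,Y)\) as the solution of Step 1 with \(f=A_1\tilde y+A_2\tilde Y+B\1_Gv\). Applying the weighted estimate of Step 2 to the difference of two solutions, with no coupling terms on the right, gives
\[
\E\int e^{\beta s}\bigl(\beta\|\delta y\|^2+\|\delta Y\|^2\bigr)\le \frac{C}{\beta}\,\E\int e^{\beta s}\bigl(\|\delta\tilde y\|^2+\|\delta\tilde Y\|^2\bigr).
\]
So \(\Phi\) is a contraction for \(\beta\) large in the weighted norm on \(L^2_\F(0,T;H)\times L^2_\F(0,T;H)\). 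Its fixed point is the solution, and the estimate of Step 2 then applies to the fixed point. Uniqueness follows from the same estimate with \(y_T=0\) and \(v=0\).

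The main delicate point is the coupling through \(Y\), which is controlled only in \(L^2\) with no extra regularity. It has to be absorbed through the \(\|Y\|^2\) term produced by the quadratic variation in Itô's formula, so the Young constant must be chosen below \(1\). The \(\beta\)-weighting is what makes the contraction work despite this.
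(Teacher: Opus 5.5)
Your proposal is correct and takes the same route as the paper. The paper simply invokes the standard variational theory for linear backward stochastic parabolic equations and notes that \(A_2Y\) is absorbed by Young's inequality, with coefficient below \(1\) against the quadratic-variation term \(\|Y\|^2\), in the Itô energy estimate; your weight \(e^{\beta t}\) does the job of the paper's Gronwall step, and your Galerkin, BDG and contraction steps spell out what the paper delegates to the cited references.
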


\begin{proof}
This is the standard variational well-posedness estimate for a linear
backward stochastic parabolic equation; see, for
example,~\cite{TangZhang,LuZhang}. The term \(A_2Y\) is absorbed by Young's
inequality in the Itô energy estimate, followed by Gronwall's lemma.
\end{proof}

\subsection{Duality and the observability principle}

The dual forward system is
\begin{equation}
\begin{cases}
dz=(d\Delta z-A_1^*z)\,dt-A_2^*z\,dW_t&\text{in }Q,\\
z=0&\text{on }\Sigma,\\
z(0)=z_0&\text{in }\D.
\end{cases}
\tag{1.1}
\end{equation}

\begin{proposition}
System (I.1) is null controllable if and only if, for every
\[
z_0\in L^2_{\mathcal F_0}(\Omega;L^2(\D)^n),
\]
the corresponding solution of (1.1)
satisfies
\begin{equation}
\E\|z(T)\|_{L^2(\D)^n}^2
\le C_T\E\int_0^T\|B^*z(t)\|_{L^2(G)^m}^2\,dt.
\tag{1.2}
\end{equation}
\end{proposition}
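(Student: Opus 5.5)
We derive the duality identity between (I.1) and (1.1) and then apply the standard functional-analytic equivalence between null controllability and observability. The \(L^2\) framework lets us take the constant in (1.2) as the square of the control cost in (I.6).

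\textbf{Step 1: duality identity.} Take \(z_0\in L^2_{\mathcal F_0}(\Omega;L^2(\D)^n)\), let \(z\) solve (1.1), and let \((y,Y)\) solve (I.1) with data \(y_T,v\); both exist by the Well-posedness Proposition (its forward analogue for (1.1)). Apply Itô's formula to \(\langle y(t),z(t)\rangle_{L^2(\D)^n}\) on \([0,T]\). Working variationally, or first with Galerkin approximations and then passing to the limit, the drift contributions are
\[
\langle -d\Delta y+A_1y+A_2Y+B\1_Gv,z\rangle+\langle y,d\Delta z-A_1^*z\rangle .
\]
The cross-variation term is \(\langle Y,-A_2^*z\rangle\). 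The Laplacian terms cancel after integration by parts with Dirichlet conditions. The \(A_1\) terms cancel, and \(A_2Y\) cancels against the quadratic covariation. The stochastic integrals are true martingales because of the integrability from the Well-posedness Proposition, so taking expectations gives
\[
\E\langle y_T,z(T)\rangle-\E\langle y(0),z_0\rangle=\E\int_0^T\langle v,B^*z\rangle_{L^2(G)^m}\,dt .
\]
This computation is the main technical point. Itô's formula for a product of a backward and a forward variational solution needs an \(H^{-1}\)–\(H^1_0\) pairing, and the covariation sign must be checked so that exactly the \(A_2Y\) term is absorbed.

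\textbf{Step 2: observability implies controllability.} Assume (1.2). We may either minimize the functional
\[
J(z_0)=\tfrac12\E\int_0^T\|B^*z\|_{L^2(G)^m}^2\,dt+\E\langle y_T,z(T)\rangle,
\]
or use the operator formulation directly. Define \(\mathcal L:L^2_{\F}(0,T;L^2(G)^m)\to L^2_{\mathcal F_0}(\Omega;L^2(\D)^n)\) by \(v\mapsto y(0)\) for \(y_T=0\). Define \(\mathcal K:y_T\mapsto y(0)\) for \(v=0\). By Step 1,
\[
\mathcal L^*z_0=-\1_GB^*z,\qquad \mathcal K^*z_0=z(T).
\]
Estimate (1.2) says \(\|\mathcal K^*z_0\|\le C_T^{1/2}\|\mathcal L^*z_0\|\) for all \(z_0\). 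By the Douglas range-inclusion lemma, \(\operatorname{Ran}\mathcal K\subset\operatorname{Ran}\mathcal L\). Moreover, every \(y_T\) admits \(v\) with \(\mathcal Lv=-\mathcal Ky_T\), which means \(y(0)=0\), and \(\|v\|^2\le C_T\E\|y_T\|^2\). This gives null controllability together with the cost bound (I.6).

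\textbf{Step 3: controllability implies observability.} Assume null controllability and consider the closed subspace \(\mathcal N\) of pairs \((y_T,v)\) for which \(y(0)=0\). The map \(\mathcal N\to L^2_{\mathcal F_T}\), \((y_T,v)\mapsto y_T\), is onto. By the open mapping theorem, or equivalently by the closed graph theorem applied to the minimal-norm control selection, there is a bounded control map with \(\|v\|^2\le C\E\|y_T\|^2\). Insert such a pair into the duality identity: since \(y(0)=0\),
\[
\E\langle y_T,z(T)\rangle=\E\int_0^T\langle v,B^*z\rangle .
\]
Now choose \(y_T=z(T)\), which is \(\mathcal F_T\)-measurable and square integrable. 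Cauchy–Schwarz then yields
\[
\E\|z(T)\|^2\le C^{1/2}\bigl(\E\|z(T)\|^2\bigr)^{1/2}\Bigl(\E\int_0^T\|B^*z\|^2_{L^2(G)^m}\Bigr)^{1/2},
\]
which is (1.2). Equivalently, one can invoke the converse direction of the Douglas lemma.
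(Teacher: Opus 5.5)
Your proposal is correct and follows essentially the same route as the paper. Both arguments derive the Itô duality identity, identify the adjoints of the terminal-to-initial and control-to-initial maps as $z(T)$ and $-B^*z|_{G\times(0,T)}$, and conclude with Douglas' range inclusion theorem; your Step~3 only unpacks the converse half of Douglas' lemma via the open mapping theorem and the test choice $y_T=z(T)$, and your Step~2 also records the cost bound (I.6) with the same constant.
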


\begin{proof}
Applying Itô's formula to the \(L^2(\D)^n\) inner product of a solution
\((y,Y)\) of (I.1) and a solution \(z\) of (1.1), all drift, diffusion, and
quadratic-covariation terms cancel except the control term. Thus
\[
\E\langle y_T,z(T)\rangle-\E\langle y(0),z_0\rangle
=\E\int_0^T\!\!\int_G\langle v,B^*z\rangle\,dx\,dt.
\]
For completeness, let
\[
\mathcal X_T=L^2_{\mathcal F_T}(\Omega;L^2(\D)^n),\quad
\mathcal X_0=L^2_{\mathcal F_0}(\Omega;L^2(\D)^n),\quad
\mathcal V=L^2_{\F}(0,T;L^2(G)^m).
\]
Denote by \(L:\mathcal X_T\to\mathcal X_0\) the terminal-to-initial map
for \(v=0\), and by \(M:\mathcal V\to\mathcal X_0\) the
control-to-initial map for zero terminal datum. Null controllability is
equivalent to
\(\operatorname{Ran}L\subset\operatorname{Ran}M\).
The identity above identifies
\[
L^*z_0=z(T),\qquad M^*z_0=-B^*z|_{G\times(0,T)};
\]
the harmless sign depends only on the convention used to write
\(L y_T+Mv=0\). Douglas' range inclusion theorem now shows that the range
inclusion is equivalent to
\(\|L^*z_0\|^2\le C_T\|M^*z_0\|^2\), which is (1.2).
Since \(\F\) is the augmented natural Brownian filtration,
\(\mathcal F_0\) is trivial up to null sets, although the Hilbert-space
formulation above is convenient for the translated arguments used later.
\end{proof}

\subsection{Factorization of the dual dynamics}

Let \(S(t)\) solve
\begin{equation}
dS(t)=-A_1^*S(t)\,dt-A_2^*S(t)\,dW_t,\qquad S(0)=I_n.
\tag{1.3}
\end{equation}

\begin{lemma}
The solution of (1.1) is
\begin{equation}
z(t)=S(t)e^{td\Delta}z_0.
\tag{1.4}
\end{equation}
\end{lemma}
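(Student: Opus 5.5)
We verify directly that the candidate process \(\tilde z(t)=S(t)e^{td\Delta}z_0\) is a variational solution of (1.1), and then appeal to uniqueness.

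\textbf{Key structural fact.} The matrices \(A_1^*,A_2^*\) act only on the component index. The scalar operator \(d\Delta\) with Dirichlet conditions acts only on the spatial variable, identically on each component. Hence \(S(t)\), which is an \(\F\)-adapted random \(n\times n\) matrix with continuous paths, commutes with \(e^{sd\Delta}\) and with \(\Delta\) acting componentwise on \(L^2(\D)^n\).

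\textbf{Step 1: the flow \(S\).} Equation (1.3) is a linear matrix SDE with constant coefficients. It has a unique strong solution, adapted and continuous, with \(\E\sup_{t\le T}|S(t)|^p<\infty\) for every \(p\).

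\textbf{Step 2: the heat factor.} Since \(z_0\) is \(\mathcal F_0\)-measurable, \(u(t)=e^{td\Delta}z_0\) is deterministic given \(z_0\). We use its spectral representation
\[
u(t)=\sum_k e^{-d\lambda_k t}\langle z_0,e_k\rangle e_k .
\]
First take \(z_0\) in the span of finitely many eigenfunctions, so that \(u\) is smooth in \(t\) with values in \(D(\Delta)\) and satisfies \(u'=d\Delta u\).

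\textbf{Step 3: Itô product rule.} For such data, apply the Itô product rule to \(S(t)u(t)\), where \(u\) has finite variation:
\[
d(Su)=(dS)\,u+S\,du=(-A_1^*Su)\,dt-(A_2^*Su)\,dW_t+S\,d\Delta u\,dt .
\]
No covariation term appears, because \(u\) has no martingale part. Since \(S\) commutes with \(\Delta\), we have \(S\,d\Delta u=d\Delta(Su)\). Therefore \(\tilde z=Su\) satisfies
\[
d\tilde z=(d\Delta\tilde z-A_1^*\tilde z)\,dt-A_2^*\tilde z\,dW_t .
\]
It also satisfies the Dirichlet boundary condition and \(\tilde z(0)=z_0\). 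It is adapted, lies in \(L^2_{\F}(0,T;H_0^1)\), and is continuous in \(L^2\). The same computation holds weakly against test functions \(\varphi\in H_0^1(\D)^n\). This is immediate by expanding in the eigenbasis, where each mode is the finite-dimensional SDE \(d(Sc_k)=(-d\lambda_k-A_1^*)Sc_k\,dt-A_2^*Sc_k\,dW\).

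\textbf{Step 4: density and uniqueness.} For general \(z_0\in L^2_{\mathcal F_0}(\Omega;L^2(\D)^n)\), approximate by spectral truncations \(z_0^N\). We need convergence of \(S e^{td\Delta}z_0^N\) in \(L^2_{\F}(\Omega;C([0,T];L^2))\cap L^2_{\F}(0,T;H_0^1)\). Here \(\mathcal F_0\) is trivial, so \(z_0\) is deterministic. Using the moment bounds on \(S\) together with the energy estimate for the heat semigroup, this convergence follows from
\[
\E\sup_t\|S(t)e^{td\Delta}(z_0-z_0^N)\|^2\le \E\sup_t|S(t)|^2\,\|z_0-z_0^N\|^2 ,
\]
and similarly for gradients. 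The solutions of (1.1) with data \(z_0^N\) converge to the solution with data \(z_0\) by the forward analogue of the well-posedness estimate (Proposition on well-posedness, whose proof adapts verbatim to the forward equation). Passing to the limit gives (1.4).

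\textbf{Main obstacle.} The only delicate point is justifying the product rule in the infinite-dimensional variational setting. The modal reduction in Step 3 handles this: the eigenspaces of the scalar Dirichlet Laplacian are invariant under every matrix, so the computation decouples into finite-dimensional Itô calculus.
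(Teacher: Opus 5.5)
Your proof is correct and follows the paper's own route: the spatially constant flow \(S(t)\) commutes with the scalar heat semigroup, Itô's product formula applies, and uniqueness for (1.1) concludes. The paper states this in one line; your spectral truncation and density argument supplies the justification of the product rule in the variational setting that the paper leaves implicit.
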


\begin{proof}
The matrix \(S(t)\) is independent of the spatial variable and commutes
with the scalar heat semigroup. Formula (1.4) follows directly from Itô's
product formula and uniqueness.
\end{proof}

Define
\[
P(t)=\E[S(t)^*S(t)],\qquad
R(t)=\E[S(t)^*BB^*S(t)].
\tag{1.5}
\]
Writing \(h(t)=e^{td\Delta}z_0\), we have
\begin{align}
\E\|z(T)\|_{L^2(\D)^n}^2
&=\int_\D h(T,x)^*P(T)h(T,x)\,dx,
\tag{1.6}\\
\E\int_0^T\|B^*z(t)\|_{L^2(G)^m}^2dt
&=\int_0^T\!\!\int_G h(t,x)^*R(t)h(t,x)\,dx\,dt.
\tag{1.7}
\end{align}
Since the stochastic fundamental matrix is invertible almost surely,
\(P(T)\) is positive definite. Hence the quantity in (1.6) is equivalent
to \(\|h(T)\|_{L^2(\D)^n}^2\).
Indeed, with \(C_0=-A_1^*\) and \(C_1=-A_2^*\), the inverse flow solves
\[
d(S^{-1})=S^{-1}(-C_0+C_1^2)\,dt-S^{-1}C_1\,dW_t,
\qquad S^{-1}(0)=I_n,
\]
as follows directly from Itô's product formula.

\subsection{The internal stochastic Gramian and the word condition}

Set
\begin{equation}
G_\tau=\int_0^\tau R(t)\,dt
=\E\int_0^\tau S(t)^*BB^*S(t)\,dt.
\tag{1.8}
\end{equation}
Let
\[
C_0=-A_1^*,\qquad C_1=-A_2^*,
\]
and
\[
N=\bigcap_{w\in\mathcal W(C_0,C_1)}\Ker(B^*w).
\tag{1.9}
\]

\begin{lemma}[Kernel of the Gramian]
For every \(\tau>0\),
\begin{equation}
\Ker G_\tau=N.
\tag{1.10}
\end{equation}
Consequently, \(G_\tau\) is positive definite if and only if (I.5) holds.
\end{lemma}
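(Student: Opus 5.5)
We first reduce the kernel to a pathwise statement. For \(\xi\in\R^n\),
\[
\xi^*G_\tau\xi=\E\int_0^\tau|B^*S(t)\xi|^2\,dt .
\]
So \(\xi\in\Ker G_\tau\) if and only if \(B^*S(t)\xi=0\) for a.e.\ \(t\in(0,\tau)\), almost surely. By continuity of the paths of \(S\), this holds for all \(t\in[0,\tau]\), almost surely. It therefore suffices to show that the adapted continuous process
\[
\phi(t)=B^*S(t)\xi
\]
vanishes identically on \([0,\tau]\) if and only if \(\xi\in N\).

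\emph{Inclusion \(N\subset\Ker G_\tau\).} The space \(N\) is invariant under \(C_0\) and \(C_1\): if \(\xi\in N\) and \(w\) is a word, then \(B^*wC_i\xi=B^*(wC_i)\xi=0\), since \(wC_i\) is again a word. Consider the linear SDE \(dX=C_0X\,dt+C_1X\,dW\) with \(X(0)=\xi\); its unique solution is \(X(t)=S(t)\xi\). This SDE restricted to \(N\) has a solution staying in \(N\), so by uniqueness \(S(t)\xi\in N\) for all \(t\). In particular \(S(t)\xi\in N\subset\Ker B^*\), hence \(\phi\equiv0\) and \(\xi\in\Ker G_\tau\).

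\emph{Inclusion \(\Ker G_\tau\subset N\).} This is the main step. Set \(N'=\Ker G_\tau\). The plan is to show that \(N'\) is invariant under \(C_0\) and \(C_1\) and contained in \(\Ker B^*\); invariance then gives \(w\xi\in N'\subset\Ker B^*\) for every word \(w\), so \(N'\subset N\).

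Inclusion in \(\Ker B^*\) follows from \(\phi(0)=B^*\xi=0\). For invariance, let \(\xi\in N'\) and write
\[
\phi(t)=\int_0^t B^*C_0S(s)\xi\,ds+\int_0^t B^*C_1S(s)\xi\,dW_s\equiv0 .
\]
A continuous semimartingale that vanishes identically has zero martingale part and zero finite-variation part, by uniqueness of the decomposition. Hence \(B^*C_1S(s)\xi=0\) and \(B^*C_0S(s)\xi=0\) for a.e.\ \(s\), almost surely, and by continuity for all \(s\).

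We iterate this argument. For any word \(w\), suppose \(B^*wS(t)\xi\equiv0\). The Itô expansion of \(wS(t)\xi\) gives drift \(wC_0S\xi\) and diffusion \(wC_1S\xi\), so the same decomposition argument yields \(B^*wC_0S\xi\equiv0\) and \(B^*wC_1S\xi\equiv0\). By induction on word length, \(B^*wS(t)\xi\equiv0\) for every word \(w\). Evaluating at \(t=0\), where \(S(0)=I_n\), gives \(B^*w\xi=0\), i.e.\ \(\xi\in N\). This induction also bypasses the separate invariance argument for \(N'\). It works for every \(\tau>0\) because only the identical vanishing of the process near \(t=0\) is used.

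\emph{Equivalence with (I.5).} By duality,
\[
N^\perp=\operatorname{span}\{w^*B\eta\},
\]
where \(w\) ranges over words in \(C_0^*=-A_1\) and \(C_1^*=-A_2\). Since \(w\mapsto w^*\) reverses letter order and the family \(\mathcal W\) is closed under reversal, and the signs are irrelevant for spans, this span equals \(\mathcal R(A_1,A_2,B)\). Thus \(G_\tau\) is positive definite \(\iff N=\{0\}\iff\) (I.5) holds.

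The main obstacle is the justification of the semimartingale uniqueness step with care: both the identical vanishing of the process and its continuity need to hold almost surely for all \(t\), not merely for a.e.\ \(t\).
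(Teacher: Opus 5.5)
Your proposal is correct and follows essentially the same route as the paper. You prove one inclusion from the invariance of \(N\) under \(C_0,C_1\). For the other you use induction on word length via uniqueness of the semimartingale decomposition of \(B^*wS(t)\xi\), with evaluation at \(t=0\). The equivalence with (I.5) comes from orthogonal complements and reversal-invariance of the word family. Your SDE-uniqueness justification that \(S(t)\xi\in N\) usefully fills in a step the paper leaves implicit.
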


\begin{proof}
If \(\xi\in N\), then \(N\) is invariant under \(C_0,C_1\). Hence
\(S(t)\xi\in N\) and \(B^*S(t)\xi=0\), so \(\xi\in\Ker G_\tau\).

Conversely, let \(\xi\in\Ker G_\tau\). Then
\[
B^*S(t)\xi=0
\]
for \(dt\otimes d\mathbb P\)-almost every \((t,\omega)\). Since this process
has continuous paths, it vanishes indistinguishably on \([0,\tau]\).
Suppose that
\(B^*wS(t)\xi=0\) for a word \(w\). Its stochastic differential is
\[
d(B^*wS(t)\xi)
=B^*wC_0S(t)\xi\,dt+B^*wC_1S(t)\xi\,dW_t.
\]
Uniqueness of the semimartingale decomposition implies that the two
coefficients vanish \(dt\otimes d\mathbb P\)-almost everywhere; their
continuity again gives indistinguishable vanishing. Induction over the word
length and evaluation at
\(t=0\) give \(B^*w\xi=0\) for every word \(w\). Thus
\(\Ker G_\tau=N\). Taking orthogonal complements gives the equivalence
with (I.5), because transposition reverses the order of a word and the
family of all words is stable under reversal; the signs in \(C_0,C_1\)
do not change the generated span.
\end{proof}

\begin{lemma}[Polynomial small-time coercivity]
Assume (I.5). There exist \(c>0\), \(\tau_0>0\), and an integer \(q\ge1\)
such that
\begin{equation}
G_\tau\ge c\tau^qI_n,\qquad 0<\tau\le\tau_0.
\tag{1.11}
\end{equation}
\end{lemma}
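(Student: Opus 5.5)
We prove \eqref{1.11} by computing the Taylor expansion of the quadratic form \(\xi\mapsto\xi^*R(t)\xi\) at \(t=0\). Compactness of the unit sphere will then convert the pointwise expansions into a uniform lower bound. The natural tool is the generator of the matrix flow (1.3).

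\textbf{Step 1: the moment flow.} For a fixed matrix \(K\), set \(\Phi_K(t)=\E[S(t)^*KS(t)]\). Itô's formula applied to \(S^*KS\) gives
\[
\frac{d}{dt}\Phi_K(t)=\Phi_{\mathcal L K}(t),\qquad
\mathcal L K=C_0^*K+KC_0+C_1^*KC_1 .
\]
Hence \(R(t)=e^{t\mathcal L}(BB^*)\) as an element of the finite-dimensional space of symmetric matrices, and \(R\) is real-analytic in \(t\). The coefficients of its expansion are
\[
R^{(j)}(0)=\mathcal L^j(BB^*).
\]
Define \(K_j=\mathcal L^j(BB^*)\). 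We then aim to show the following claim.

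\emph{Claim.} For each unit \(\xi\), let \(j(\xi)\) be the least \(j\) with \(\xi^*K_j\xi\ne0\). Then \(j(\xi)\) exists, is bounded by some \(q_0\) independent of \(\xi\), and the leading coefficient \(\xi^*K_{j(\xi)}\xi\) is positive.

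Positivity is automatic from \(R(t)\ge0\). Existence follows from Lemma (Kernel of the Gramian): if all \(\xi^*K_j\xi\) vanished, then \(\xi^*R(t)\xi\equiv0\), so \(\xi\in\Ker G_\tau=N=\{0\}\).

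\textbf{Step 2: a uniform bound on the order.} This is the main obstacle, since the orders \(j(\xi)\) may jump as \(\xi\) varies. We handle it in two parts.

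First, we bound the order. The span of \(\{K_j\}\) stabilizes after at most \(D=n(n+1)/2\) steps, by Cayley--Hamilton applied to \(\mathcal L\). Therefore if \(\xi^*K_j\xi=0\) for all \(j\le D\), it vanishes for all \(j\). So \(j(\xi)\le D\).

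Second, we produce the uniform estimate. Consider the polynomial
\[
p_\xi(\tau)=\sum_{j\le D}\frac{\tau^{j+1}}{(j+1)!}\,\xi^*K_j\xi,
\]
which approximates \(\xi^*G_\tau\xi\) with remainder \(O(\tau^{D+2})\), uniformly in \(|\xi|=1\). We need \(p_\xi(\tau)\ge c\tau^{q}\) uniformly in \(\xi\) for some \(q\).

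A cleaner route avoids comparing coefficients directly. For \(\tau\in(0,1]\), define the rescaled quantity
\[
g(\xi,s)=\xi^*G_{s}\xi .
\]
Suppose, towards a contradiction, that \(\min_{|\xi|=1}\xi^*G_\tau\xi/\tau^{q}\to0\) along some sequence \(\tau_k\to0\), where \(q=D+1\). On the finite-dimensional space of polynomials of degree at most \(D+1\) vanishing at \(0\), the quantity
\[
\sup_{s\in[0,1]}|p(s)|
\]
is a norm, and so is \(\max_j|a_j|\) on the coefficients. Equivalence of these norms gives, for \(\tau\le1\),
\[
\max_{s\le\tau}p_\xi(s)\ge c\,\tau^{D+1}\max_j|\xi^*K_j\xi|.
\]
Indeed, substituting \(s=\tau u\) rescales the coefficients by \(\tau^{j+1}\ge\tau^{D+1}\).

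Now \(\xi\mapsto\max_{j\le D}|\xi^*K_j\xi|\) is continuous and, by Step 1, nonvanishing on the unit sphere. Hence it is bounded below by some \(c_1>0\).

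Since \(G_s\) is nondecreasing in \(s\), we have \(\xi^*G_\tau\xi\ge\max_{s\le\tau}\xi^*G_s\xi\). Combining this with the remainder bound gives
\[
\xi^*G_\tau\xi\ge c\,c_1\tau^{D+1}-C\tau^{D+2}.
\]

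\textbf{Step 3: conclusion.} Choose \(\tau_0\) small so that \(C\tau_0\le cc_1/2\). Then \(G_\tau\ge \tfrac{cc_1}{2}\tau^{D+1}I_n\) for \(0<\tau\le\tau_0\). This gives \eqref{1.11} with \(q=D+1\).

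The monotonicity used in Step 2 holds because \(R\ge0\). The norm-equivalence step is the place where the argument must be checked carefully: the relevant polynomials have degree bounded by \(D+1\), and that is exactly what makes the equivalence constant uniform in \(\xi\).
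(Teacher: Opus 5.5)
Your proof is correct after one small repair, and it takes a genuinely different route from the paper.

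\textbf{The repair.} Equivalence of norms on polynomials of degree at most $D+1$ controls $\sup_{s\le\tau}|p_\xi(s)|$, not $\max_{s\le\tau}p_\xi(s)$. Since $\xi^*G_s\xi\ge0$ and the remainder is $O(s^{D+2})$ uniformly in $|\xi|=1$, you have $p_\xi(s)\ge -C\tau^{D+2}$ on $[0,\tau]$. Once $C\tau<cc_1$, the supremum of $|p_\xi|$ therefore equals the supremum of $p_\xi$, which gives the inequality you wrote. Three smaller points: remove the half-started contradiction hypothesis and the unused function $g(\xi,s)$, since your argument is direct; and note that Cayley--Hamilton in fact gives $j(\xi)\le D-1$.

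\textbf{Comparison with the paper.} The paper never works direction by direction. It observes that $\det G_\tau$ is analytic and positive for $\tau>0$, so $\det G_\tau=a_r\tau^r+O(\tau^{r+1})$ with $a_r>0$. It then combines this with $\|G_\tau\|\le C\tau$ and $\lambda_{\min}(G_\tau)\ge\det G_\tau/\|G_\tau\|^{n-1}$ to get $q=r-n+1$.
\begin{itemize}
\item The paper's route is shorter and needs no uniformity argument over the unit sphere. It also produces the exponent $q_{\rm det}$ of the subsequent Remark, which is computable by exact arithmetic. However, it gives no a priori bound on $r$, and $q_{\rm det}$ can be non-sharp ($5$ instead of $4$ in the mixed-word example).
\item Your route combines the Krylov bound for $\mathcal L$ on the $n(n+1)/2$-dimensional space of symmetric matrices, polynomial norm equivalence with scaling, monotonicity of $G_s$, and compactness of the sphere. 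It gives an exponent depending only on $n$, at the price of a non-explicit constant $c_1$.
\end{itemize}
Your method actually gives more. Run it with the degree $J=\max_{|\xi|=1}j(\xi)$ in place of $D$; this maximum is attained since $j$ takes finitely many values. You obtain $G_\tau\ge c\tau^{J+1}I_n$ for small $\tau$. Conversely, any unit $\xi$ with $j(\xi)=J$ satisfies $\xi^*G_\tau\xi\le C\tau^{J+1}$. So your approach identifies the optimal exponent $J+1$. In the mixed-word example one checks $j(e_3)=3=J$, giving exponent $4$, which matches the Schur-complement computation $\lambda_{\min}(G_\tau)\sim\tau^4/36$ there.
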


\begin{proof}
On the space of symmetric matrices, define
\[
\mathcal L(Q)=C_0^*Q+QC_0+C_1^*QC_1.
\]
The stochastic flow property and Itô's formula give
\[
\E[S(t)^*QS(t)]=e^{t\mathcal L}Q.
\]
Therefore
\[
G_\tau
=\sum_{k=0}^{\infty}
\frac{\tau^{k+1}}{(k+1)!}\mathcal L^k(BB^*),
\]
so \(G_\tau\) is analytic in \(\tau\). By the preceding lemma,
\(\det G_\tau>0\) for every \(\tau>0\). Thus
\[
\det G_\tau=a_r\tau^r+O(\tau^{r+1})
\]
for some \(r\ge n\) and \(a_r>0\). Moreover,
\(\|G_\tau\|\le C\tau\). If
\(\lambda_1(\tau)\le\cdots\le\lambda_n(\tau)\) are its eigenvalues,
\[
\lambda_1(\tau)
\ge\frac{\det G_\tau}{\|G_\tau\|^{n-1}}
\ge c\tau^{r-n+1}.
\]
This proves (1.11).
\end{proof}

\begin{remark}[An effective admissible exponent]
The proof gives a directly computable exponent. Put
\[
H_j=\frac1{j!}\mathcal L^{j-1}(BB^*),\qquad j\ge1,
\]
so that \(G_\tau=\sum_{j\ge1}\tau^jH_j\), and let
\[
r_*=\min\left\{r\ge n:
[\tau^r]\det\!\left(\sum_{j=1}^{r}\tau^jH_j\right)\ne0\right\}.
\tag{1.12}
\]
Here \([\tau^r]\) denotes coefficient extraction. Under (I.5), the set in
(1.12) is nonempty, its first coefficient is positive, and
\[
q_{\rm det}=r_*-n+1
\tag{1.13}
\]
is an admissible exponent in (1.11). Thus, when the matrix entries are
specified exactly, \(q_{\rm det}\) can be found by symbolic or exact
matrix arithmetic, successively computing
\(\mathcal L^j(BB^*)\) and the determinant coefficients. This exponent
need not be the smallest possible one: the determinant argument bounds the
other \(n-1\) eigenvalues only by \(O(\tau)\). No optimality of
\(q_{\rm det}\) is used below.

The ordinary length of the shortest spanning words does not by itself
determine the small-time exponent. Under Brownian scaling, a drift letter
and a noise letter have different temporal weights. The two-state examples
below already show this: both transfers use a word of length one, whereas
the drift and noise Gramians have respective coercivity orders three and
two. Thus the quantitative argument must use formula (1.12), not ordinary
word length.
\end{remark}

\subsection{Low-frequency observability}

Let \((\lambda_k,\phi_k)\) be the Dirichlet eigenpairs of \(-\Delta\), and
let \(\Pi_\Lambda\) be the orthogonal projection on
\[
E_\Lambda^n
=\operatorname{span}\{\phi_k:\lambda_k\le\Lambda\}^n.
\]

\begin{proposition}
Assume (I.5). There exist \(C,q,\tau_0>0\) such that, for
\(z_0\in E_\Lambda^n\) and \(0<\tau\le\tau_0\),
\begin{equation}
\|z_0\|_{L^2(\D)^n}^2
\le C\tau^{-q}e^{C\sqrt\Lambda+2d\Lambda\tau}
\E\int_0^\tau\|B^*z(t)\|_{L^2(G)^m}^2dt.
\tag{1.14}
\end{equation}
\end{proposition}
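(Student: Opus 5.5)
We prove (1.14) by combining the factorization (1.4), the representation (1.7), the small-time coercivity (1.11), and the spectral inequality stated in the introductory remark. Write \(z_0=\sum_{\lambda_k\le\Lambda}\phi_k a_k\) with \(a_k\in\R^n\), and \(h(t)=e^{td\Delta}z_0=\sum_k e^{-d\lambda_k t}\phi_k a_k\in E_\Lambda^n\). By (1.7),
\[
\E\int_0^\tau\|B^*z(t)\|_{L^2(G)^m}^2dt=\int_0^\tau\!\!\int_G h(t,x)^*R(t)h(t,x)\,dx\,dt .
\]
The difficulty is that \(R(t)\) is only positive semidefinite pointwise in \(t\). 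Positivity appears only after time integration, through \(G_\tau\), while \(h\) itself depends on \(t\). The plan is therefore to freeze \(h\) at a single time and control the error using the exponential factors.

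\textbf{Decoupling time.} Since the algebra \(\Pi_\Lambda\) acts on scalar components, \(h(t)=e^{td\Delta}h(0)\) with \(e^{td\Delta}\) invertible on \(E_\Lambda\). Pointwise in \(x\) one cannot simply replace \(h(t,x)\) by \(h(0,x)\). Instead, we use that \(R(t)\) is a constant matrix in \(x\) and work in the orthonormal eigenbasis. Take a square root \(R(t)=K(t)^*K(t)\), and for \(t\in[0,\tau]\) view \(K(t)h(t)=e^{td\Delta}K(t)z_0\) as an element of \(E_\Lambda^m\). The spectral inequality, applied componentwise to \(f=e^{td\Delta}K(t)z_0\in E_\Lambda\), gives
\[
\int_G |K(t)h(t)|^2dx\ge C^{-1}e^{-C\sqrt\Lambda}\,\|e^{td\Delta}K(t)z_0\|_{L^2(\D)}^2 .
\]
Next, the dissipation factor is bounded on \(E_\Lambda\) by \(\|e^{td\Delta}f\|^2\ge e^{-2d\Lambda t}\|f\|^2\ge e^{-2d\Lambda\tau}\|f\|^2\). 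Combining the two estimates yields
\[
\int_0^\tau\!\!\int_G h^*Rh\ge C^{-1}e^{-C\sqrt\Lambda-2d\Lambda\tau}\int_0^\tau\|K(t)z_0\|^2_{L^2(\D)}dt .
\]

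\textbf{Finite-dimensional step.} By Parseval,
\[
\int_0^\tau\|K(t)z_0\|^2dt=\sum_k a_k^*\Bigl(\int_0^\tau R(t)\,dt\Bigr)a_k=\sum_k a_k^*G_\tau a_k .
\]
The polynomial small-time coercivity lemma, (1.11), gives \(\sum_k a_k^*G_\tau a_k\ge c\tau^q\sum_k|a_k|^2=c\tau^q\|z_0\|^2\) for \(\tau\le\tau_0\). Chaining the three bounds gives (1.14).

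\textbf{Main obstacle.} The delicate point is the first step: applying the spectral inequality to \(K(t)h(t)\) rather than to \(h(t)\). This is legitimate because \(K(t)\) is an \(x\)-independent matrix, so every component of \(K(t)h(t)\) is a scalar combination of the components of \(h(t)\) and therefore stays in \(E_\Lambda\); the spectral inequality holds for each component separately. The same \(x\)-independence is what lets \(K(t)\) commute with \(e^{td\Delta}\). Measurability of \(t\mapsto K(t)\) is harmless: one can take \(K(t)=R(t)^{1/2}\), which is continuous, since \(R(t)=e^{t\mathcal L}(BB^*)\) is continuous in \(t\).
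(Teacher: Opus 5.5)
Your proof is correct and uses the same ingredients as the paper: the modal decomposition coming from the factorization (1.4), the lower bound $e^{-2d\Lambda\tau}$ on $E_\Lambda$, the Gramian coercivity (1.11), and the spectral inequality. The only difference is the order of the steps. The paper first integrates over all of $\D$, uses orthogonality to reach $\sum_k\xi_k^*G_\tau\xi_k$, and only at the end applies the spectral inequality pathwise to $B^*z(t,\omega)\in E_\Lambda^m$. You instead apply the spectral inequality first, to the deterministic function $R(t)^{1/2}h(t)$, which works equally well because $\int_G h^*R(t)h\,dx=\|R(t)^{1/2}h(t)\|_{L^2(G)^n}^2$.
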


\begin{proof}
Write \(z_0=\sum_{\lambda_k\le\Lambda}\xi_k\phi_k\). Then
\[
z(t)=\sum_{\lambda_k\le\Lambda}
e^{-d\lambda_kt}S(t)\xi_k\phi_k.
\]
To make clear that no pointwise-in-\(x\) coercivity is being used, first
integrate over the whole domain. Orthogonality of the eigenfunctions gives
\[
\begin{split}
\E\int_0^\tau\|B^*z(t)\|_{L^2(\D)^m}^2dt
&=\sum_{\lambda_k\le\Lambda}
\int_0^\tau e^{-2d\lambda_kt}\,
\xi_k^*R(t)\xi_k\,dt\\
&\ge e^{-2d\Lambda\tau}
\sum_{\lambda_k\le\Lambda}\xi_k^*G_\tau\xi_k.
\end{split}
\]
Consequently, (1.11) gives
\[
\|z_0\|^2
\le C\tau^{-q}e^{2d\Lambda\tau}
\E\int_0^\tau\|B^*z(t)\|_{L^2(\D)^m}^2dt.
\]
For each \((t,\omega)\), \(B^*z(t,\omega)\in E_\Lambda^m\). The
Jerison--Lebeau spectral inequality
\cite[Theorem~14.6; chapter pp.~223--239]{JerisonLebeau}
\[
\|f\|_{L^2(\D)^m}^2
\le Ce^{C\sqrt\Lambda}\|f\|_{L^2(G)^m}^2
\]
completes the proof.
\end{proof}

\begin{corollary}[Translated and conditional estimate]
Let \(s\ge0\), \(0<\tau\le\tau_0\), and let
\[
 z_s\in L^2_{\mathcal F_s}(\Omega;E_\Lambda^n).
\]
If \(z\) solves the dual equation on \([s,s+\tau]\) with \(z(s)=z_s\),
then
\begin{equation}
\E\|z_s\|^2
\le C\tau^{-q}e^{C\sqrt\Lambda+2d\Lambda\tau}
\E\int_s^{s+\tau}\|B^*z(t)\|_{L^2(G)^m}^2dt,
\tag{1.15}
\end{equation}
with constants independent of \(s\).
\end{corollary}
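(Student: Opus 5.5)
The plan is to reduce the translated estimate to the untranslated Proposition (1.14) by conditioning on \(\mathcal F_s\) and using the Markov (flow) structure of the finite-dimensional factor. On \([s,s+\tau]\) the solution factorizes as in (1.4), i.e.
\[
z(t)=S_s(t)\,e^{(t-s)d\Delta}z_s,
\]
where \(S_s\) solves (1.3) started at time \(s\) from \(I_n\):
\[
dS_s(t)=-A_1^*S_s(t)\,dt-A_2^*S_s(t)\,dW_t,\qquad S_s(s)=I_n.
\]
The proof of the factorization lemma applies verbatim; the only change is that \(z_s\) is \(\mathcal F_s\)-measurable instead of deterministic, and Itô's product formula still applies because \(z_s\) is constant in time after \(s\). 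The key observation is that \(S_s(t)\) depends only on the increments \(W_r-W_s\), \(r\in[s,t]\). Hence \(S_s\) is independent of \(\mathcal F_s\), and \((S_s(s+r))_{r\ge0}\) has the same law as \((S(r))_{r\ge0}\).

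Next I would expand \(z_s=\sum_{\lambda_k\le\Lambda}\xi_k\phi_k\) with \(\mathcal F_s\)-measurable random coefficients \(\xi_k\in\R^n\). This is a finite sum, and each \(\xi_k\) is square integrable. Then compute
\[
\E\int_s^{s+\tau}\|B^*z(t)\|_{L^2(\D)^m}^2dt
=\sum_{\lambda_k\le\Lambda}\E\int_0^\tau e^{-2d\lambda_k r}\,
\xi_k^*\,\E\bigl[S_s(s+r)^*BB^*S_s(s+r)\,\big|\,\mathcal F_s\bigr]\,\xi_k\,dr .
\]
By independence, the conditional expectation equals \(R(r)\) from (1.5), a deterministic matrix. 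Bounding \(e^{-2d\lambda_k r}\ge e^{-2d\Lambda\tau}\) and integrating in \(r\) gives the lower bound
\[
e^{-2d\Lambda\tau}\sum_{\lambda_k\le\Lambda}\E\,\xi_k^*G_\tau\xi_k
\;\ge\; c\,\tau^q e^{-2d\Lambda\tau}\,\E\|z_s\|^2
\]
by (1.11). Orthogonality of the \(\phi_k\) is used exactly as in the proof of (1.14).

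It remains to pass from \(L^2(\D)\) to \(L^2(G)\). For each \((t,\omega)\), \(B^*z(t,\omega)\in E_\Lambda^m\), so the Jerison--Lebeau inequality applies pathwise with a deterministic constant. Integrating over \(t\) and taking the expectation then gives (1.15). All constants come from (1.11) and the spectral inequality, so they are independent of \(s\).

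The only delicate step is justifying the conditional expectation identity rigorously. This means checking the measurability and independence claim for \(S_s\), which follows from pathwise uniqueness of the linear SDE driven by the shifted Brownian motion \(W_{s+\cdot}-W_s\). It also means checking integrability of \(\xi_k^*S_s^*BB^*S_s\xi_k\) so that the tower property applies. Integrability holds because \(S_s\) has moments of all orders and is independent of \(\xi_k\in L^2\), so the expectation of the product factorizes and is finite.
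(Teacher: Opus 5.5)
Your proposal is correct and follows essentially the same route as the paper. The paper also uses the shifted fundamental matrix \(S(t,s)\) driven by \(W_t-W_s\), its independence from \(\mathcal F_s\) and equality in law with \(S(t-s,0)\). It then reruns the modal computation behind (1.14) conditionally on \(\mathcal F_s\) and applies the deterministic spectral inequality pointwise in \((t,\omega)\). You simply make the conditional step explicit through the identity \(\E[S_s(s+r)^*BB^*S_s(s+r)\mid\mathcal F_s]=R(r)\) and the integrability check, which the paper leaves implicit.
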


\begin{proof}
Let \(S(t,s)\) be the stochastic fundamental matrix driven by
\(W_t-W_s\). Conditional on \(\mathcal F_s\), the coefficients of \(z_s\)
are fixed, whereas \(S(t,s)\) is independent of \(\mathcal F_s\) and has
the same law as \(S(t-s,0)\). Applying the proof of (1.14) conditionally and
then taking expectation gives (1.15). The spectral inequality is
deterministic and is applied pointwise in \((t,\omega)\).
\end{proof}

\begin{lemma}[Partial spectral controllability]
Let \(0\le a<b\), \(\tau=b-a\le\tau_0\), and let
\(\eta\in L^2_{\mathcal F_b}(\Omega;L^2(\D)^n)\). Under (I.5), there is an
adapted control \(v_\Lambda\), supported in \(G\times(a,b)\), such that the
solution of the backward equation with terminal datum \(y(b)=\eta\)
satisfies
\[
\Pi_\Lambda y(a)=0
\]
and
\begin{equation}
\E\int_a^b\|v_\Lambda(t)\|_{L^2(G)^m}^2dt
\le C\tau^{-q}e^{C\sqrt\Lambda+2d\Lambda\tau+C\tau}\E\|\eta\|^2.
\tag{1.16}
\end{equation}
The same right-hand side, up to a fixed multiplicative constant, bounds
\(\E\|y(a)\|^2\).
\end{lemma}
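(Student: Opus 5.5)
The plan is to prove this by duality on the interval \([a,b]\), restricted to the low-frequency space \(E_\Lambda^n\). The translated observability estimate (1.15) supplies the needed bound. The projected control problem is to find \(v\) on \((a,b)\), supported in \(G\), with \(\Pi_\Lambda y(a)=0\). Since \(\Pi_\Lambda\) commutes with \(-d\Delta\) and with the constant matrices \(A_1,A_2\), this is a range-inclusion problem of the type treated in the duality proposition. Let \(L_\Lambda:\eta\mapsto \Pi_\Lambda y(a)\big|_{v=0}\) and \(M_\Lambda:v\mapsto \Pi_\Lambda y(a)\big|_{\eta=0}\), both mapping into \(L^2_{\mathcal F_a}(\Omega;E_\Lambda^n)\).

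Repeating the Itô duality identity with \(z(a)=z_a\in L^2_{\mathcal F_a}(\Omega;E_\Lambda^n)\) identifies the adjoints as \(L_\Lambda^*z_a=z(b)\) and \(M_\Lambda^*z_a=\mp B^*z\,\1_G\) on \((a,b)\). Here \(z\) solves (1.1) on \([a,b]\) and stays in \(E_\Lambda^n\), because \(z(t)=S(t,a)e^{(t-a)d\Delta}z_a\).

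I would rather build the control explicitly than appeal to Douglas' theorem, since that yields the constant in (1.16) directly. Minimize the functional
\[
J(z_a)=\tfrac12\E\int_a^b\|B^*z\|_{L^2(G)^m}^2dt+\E\langle \eta,z(b)\rangle
\]
over \(z_a\in L^2_{\mathcal F_a}(\Omega;E_\Lambda^n)\). To see that \(J\) is coercive, first bound \(\E\|z(b)\|^2\le C e^{C\tau}\E\|z_a\|^2\). This holds because the heat factor is contractive and \(\E[S^*S]=e^{\tau\mathcal L}I\), with the translated flow independent of \(\mathcal F_a\) as in the corollary. Combining this with (1.15) gives
\[
\E\|z(b)\|^2\le C\tau^{-q}e^{C\sqrt\Lambda+2d\Lambda\tau+C\tau}\,\E\int_a^b\|B^*z\|^2_{L^2(G)^m}dt .
\]
Since \(E_\Lambda^n\) is finite-dimensional and the Gramian is positive, the quadratic form is equivalent to \(\E\|z_a\|^2\) on this closed subspace. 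So a minimizer \(\hat z_a\) exists.

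Set \(v_\Lambda=\pm B^*\hat z\,\1_G\), which is adapted and supported in \(G\times(a,b)\). The Euler–Lagrange equation combined with the duality identity gives \(\E\langle y(a),\zeta\rangle=0\) for all \(\zeta\in L^2_{\mathcal F_a}(\Omega;E_\Lambda^n)\). Because \(y(a)\) is \(\mathcal F_a\)-measurable, this forces \(\Pi_\Lambda y(a)=0\). For the cost, \(J(\hat z_a)\le J(0)=0\) gives \(\tfrac12\|v_\Lambda\|^2\le (\E\|\eta\|^2)^{1/2}(\E\|\hat z(b)\|^2)^{1/2}\). Inserting the displayed observability bound, with \(\|v_\Lambda\|^2=\E\int_a^b\|B^*\hat z\|^2_{L^2(G)^m}dt\), yields (1.16). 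The bound on \(\E\|y(a)\|^2\) then follows from the well-posedness proposition on \([a,b]\), which gives \(\E\|y(a)\|^2\le C(\E\|\eta\|^2+\|v_\Lambda\|^2)\) with a constant uniform for \(\tau\le\tau_0\). Since the factor in (1.16) is \(\ge1\) up to constants, it absorbs \(\E\|\eta\|^2\).

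The main obstacle I expect is the measurability and adaptedness bookkeeping in the duality step. Test functions must be \(\mathcal F_a\)-measurable rather than deterministic, because \(\mathcal F_a\) is not trivial for \(a>0\), and the orthogonality has to be converted into \(\Pi_\Lambda y(a)=0\) almost surely. The translated corollary already handles the conditional structure, so I would apply it directly to \(\mathcal F_a\)-measurable \(z_a\). A second point to check is that the constant in the bound \(\E\|z(b)\|^2\le Ce^{C\tau}\E\|z_a\|^2\) does not depend on \(a\). This follows from the independence of the increments of \(W\) after time \(a\).
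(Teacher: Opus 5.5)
Your proposal is correct and follows essentially the paper's route. It uses the translated estimate (1.15) together with the bound $\E\|z(b)\|^2\le Ce^{C\tau}\E\|z_a\|^2$ on $L^2_{\mathcal F_a}(\Omega;E_\Lambda^n)$, the Itô identity (1.18), the choice $z_a=\Pi_\Lambda y(a)$, and well-posedness to bound $\E\|y(a)\|^2$. The only change is that you build the control by minimizing the HUM functional $J$ rather than by Hahn--Banach and Riesz on the range of $z_a\mapsto B^*z|_{G\times(a,b)}$; this gives the same minimal-norm control $\pm B^*\hat z\,\1_G$. One small point: coercivity of $J$ for $a>0$ comes from (1.15) itself, not from finite-dimensionality, since $L^2_{\mathcal F_a}(\Omega;E_\Lambda^n)$ is infinite-dimensional.
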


\begin{proof}
Set
\[
\mathcal H_\Lambda
=L^2_{\mathcal F_a}(\Omega;E_\Lambda^n),\qquad
\mathcal U_{b,a}z_a=z(b),\qquad
\mathcal O_{b,a}z_a=B^*z|_{G\times(a,b)}.
\]
The translated estimate (1.15) says
\[
\|z_a\|_{\mathcal H_\Lambda}^2
\le K(\Lambda,\tau)\|\mathcal O_{b,a}z_a\|_{L^2_\F}^2,
\qquad
K(\Lambda,\tau)
\le C\tau^{-q}e^{C\sqrt\Lambda+2d\Lambda\tau}.
\tag{1.17}
\]
Since the dual evolution preserves \(E_\Lambda^n\), its energy at \(b\)
is bounded by \(Ce^{C\tau}\) times its energy at \(a\). Therefore the
functional
\[
\ell_\eta(z_a)=\E\langle\eta,\mathcal U_{b,a}z_a\rangle
\]
satisfies
\[
|\ell_\eta(z_a)|
\le Ce^{C\tau/2}K(\Lambda,\tau)^{1/2}
\|\eta\|_{L^2_{\mathcal F_b}}\,
\|\mathcal O_{b,a}z_a\|_{L^2_\F}.
\]
It defines a bounded functional on
\(\operatorname{Ran}\mathcal O_{b,a}\). By Hahn--Banach and the Riesz
representation theorem, there exists \(v_\Lambda\) such that
\[
\ell_\eta(z_a)
=\E\int_a^b\!\!\int_G
\langle v_\Lambda,B^*z\rangle\,dx\,dt
\quad\text{for every }z_a\in\mathcal H_\Lambda,
\]
and
\[
\|v_\Lambda\|_{L^2_\F}^2
\le Ce^{C\tau}K(\Lambda,\tau)\E\|\eta\|^2.
\]
This is (1.16). More explicitly, Itô's formula on \([a,b]\), applied to
the controlled backward solution \(y\) with \(y(b)=\eta\) and to the
forward solution \(z=\mathcal U_{\cdot,a}z_a\), gives
\[
\E\langle\eta,\mathcal U_{b,a}z_a\rangle
-\E\langle y(a),z_a\rangle
=\E\int_a^b\!\!\int_G
\langle v_\Lambda,B^*z\rangle\,dx\,dt.
\tag{1.18}
\]
Comparison with the Riesz representation of \(\ell_\eta\) gives
\(\E\langle y(a),z_a\rangle=0\) for every
\(z_a\in\mathcal H_\Lambda\). Since
\(\Pi_\Lambda y(a)\in\mathcal H_\Lambda\), one may choose
\(z_a=\Pi_\Lambda y(a)\); hence
\(\E\|\Pi_\Lambda y(a)\|^2=0\), which proves
\(\Pi_\Lambda y(a)=0\) almost surely. Finally, the standard \(L^2\) estimate for the
linear backward equation, together with (1.16), gives the state estimate.
\end{proof}

\subsection{High-frequency dissipation and spectral iteration}

\begin{lemma}[Dissipation]
There exist structural constants \(K_{\rm dis}\ge1\) and
\(\omega_{\rm dis}\ge0\) such that
\begin{equation}
\E\|(I-\Pi_\Lambda)z(t)\|^2
\le K_{\rm dis}e^{\omega_{\rm dis}(t-s)-2d\Lambda(t-s)}
\E\|(I-\Pi_\Lambda)z(s)\|^2
\tag{1.19}
\end{equation}
for \(0\le s<t\le T\).
The adjoint backward evolution satisfies the same estimate from its right
endpoint to its left endpoint when the control vanishes and the terminal
datum belongs to \((I-\Pi_\Lambda)L^2(\D)^n\).
\end{lemma}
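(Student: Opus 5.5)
Write $z(t)=S(t,s)e^{(t-s)d\Delta}z(s)$ for $t\ge s$, by the factorization (1.4) applied on $[s,t]$ with the fundamental matrix $S(t,s)$ driven by $W_\cdot-W_s$. Since $S(t,s)$ is spatially constant, it commutes with $\Pi_\Lambda$. Hence
\[
(I-\Pi_\Lambda)z(t)=S(t,s)\,e^{(t-s)d\Delta}(I-\Pi_\Lambda)z(s).
\]
Expand $(I-\Pi_\Lambda)z(s)=\sum_{\lambda_k>\Lambda}\xi_k\phi_k$, where the coefficients $\xi_k$ are $\mathcal F_s$-measurable. Orthogonality of the $\phi_k$ gives
\[
\E\|(I-\Pi_\Lambda)z(t)\|^2=\sum_{\lambda_k>\Lambda}e^{-2d\lambda_k(t-s)}\,\E\bigl[\xi_k^*S(t,s)^*S(t,s)\xi_k\bigr].
\]

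The key step is to condition on $\mathcal F_s$. The matrix $S(t,s)$ is independent of $\mathcal F_s$ and has the law of $S(t-s,0)$, exactly as in the proof of the translated corollary. This gives
\[
\E\bigl[\xi_k^*S(t,s)^*S(t,s)\xi_k\bigr]=\E\bigl[\xi_k^*P(t-s)\xi_k\bigr].
\]
Using $P(\tau)=e^{\tau\mathcal L}I_n$ from the coercivity lemma, we get $\|P(\tau)\|\le e^{\tau\|\mathcal L\|}$. Together with $e^{-2d\lambda_k(t-s)}\le e^{-2d\Lambda(t-s)}$ for $\lambda_k>\Lambda$, this yields (1.19) with $K_{\rm dis}=1$ and $\omega_{\rm dis}=\|\mathcal L\|$. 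Alternatively, one can apply Itô's formula to $|S\xi|^2$ and then Gronwall. These constants depend only on $A_1$ and $A_2$, which is what "structural" means here.

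For the backward statement, take $v=0$ and $y(b)\in(I-\Pi_\Lambda)L^2(\D)^n$. First I would show that the backward system preserves the high-frequency subspace. Project (I.1) onto each eigenfunction $\phi_k$. This gives the decoupled finite-dimensional BSDEs
\[
dy_k=\bigl[d\lambda_ky_k+A_1y_k+A_2Y_k\bigr]dt+Y_k\,dW,\qquad y_k(b)=\langle y(b),\phi_k\rangle .
\]
By uniqueness, $y_k\equiv0$ and $Y_k\equiv0$ whenever $\lambda_k\le\Lambda$.

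Next, for each $k$, set $\tilde y_k=e^{d\lambda_k(b-t)}y_k$ and $\tilde Y_k=e^{d\lambda_k(b-t)}Y_k$. Then $(\tilde y_k,\tilde Y_k)$ solves
\[
d\tilde y=(A_1\tilde y+A_2\tilde Y)\,dt+\tilde Y\,dW,\qquad \tilde y(b)=y_k(b),
\]
which does not depend on $k$. The Itô energy estimate for $|\tilde y|^2$ gives the drift bound
\[
-2\tilde y^*A_1\tilde y-2\tilde y^*A_2\tilde Y+|\tilde Y|^2\ \ge\ -(2|A_1|+|A_2|^2)|\tilde y|^2,
\]
where the $A_2\tilde Y$ term is absorbed by Young's inequality as in the well-posedness proposition. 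Gronwall then gives
\[
\E|\tilde y(a)|^2\le e^{\omega(b-a)}\,\E|\tilde y(b)|^2 .
\]
Undoing the exponential weight and summing over $\lambda_k>\Lambda$ gives the same estimate (1.19) from $b$ back to $a$, after enlarging $\omega_{\rm dis}$ if necessary.

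The only delicate point is the use of $\mathcal F_s$-measurable data. This is handled by the independence argument in the forward case and by the modewise Itô estimate, which applies to random data, in the backward case.
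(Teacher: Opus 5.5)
Your proof is correct. The forward half matches the paper, and the backward half takes a genuinely different route.

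\textbf{Forward half.} You factor $z(t)=S(t,s)e^{(t-s)d\Delta}z(s)$, condition on $\mathcal F_s$ using the independence of $S(t,s)$, and use heat decay on the modes $\lambda_k>\Lambda$. The paper does the same, but bounds $\E(\|S(t,s)\|^2\mid\mathcal F_s)$. Your identity $\E[S(t,s)^*S(t,s)]=e^{(t-s)\mathcal L}I_n$ is slightly sharper and yields the explicit constants $K_{\rm dis}=1$, $\omega_{\rm dis}=\|\mathcal L\|$.

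\textbf{Backward half: the paper's route.} The paper proves no second energy estimate. Itô's formula identifies the free terminal-to-left-endpoint map with the Hilbert adjoint $U(t,s)^*$ on the adapted $L^2$ spaces. Since $\Pi_\Lambda$ commutes with both $U(t,s)$ and $U(t,s)^*$, the high-frequency restriction of $U(t,s)^*$ is the adjoint of that of $U(t,s)$. It therefore has the same norm and inherits (1.19) with identical constants.

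\textbf{Backward half: your route.} You argue directly. You decouple into modal BSDEs and use uniqueness to kill the low modes. The weight $e^{d\lambda_k(b-t)}$, a backward analogue of the factorization, removes $\lambda_k$. You then close with Itô, Young and Gronwall.

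\textbf{Comparison.} Your argument is self-contained and explicit, and it handles random terminal data mode by mode without operator theory. It costs a justification that the variational solution may be tested against each $\phi_k$, plus a second Gronwall argument. It also produces the rate $2|A_1|+|A_2|^2$. Since $\|\mathcal L\|\le 2|A_1|+|A_2|^2$, this single value of $\omega_{\rm dis}$ serves both directions. The paper's argument is shorter and automatically symmetric in its constants. It also reuses the duality identity that drives the rest of Section~1.

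\textbf{Cosmetic point.} Itô's formula for $|\tilde y|^2$ actually produces $2\tilde y^*A_1\tilde y+2\tilde y^*A_2\tilde Y+|\tilde Y|^2$, with plus signs on the cross terms. Your lower bound holds for either sign, so nothing is affected.
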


\begin{proof}
Use the flow \(S(t,s)\), condition on \(\mathcal F_s\), and
\[
\E(\|S(t,s)\|^2\mid\mathcal F_s)
\le K_{\rm dis}e^{\omega_{\rm dis}(t-s)}.
\]
The heat semigroup restricted to the frequencies above \(\Lambda\) has
norm at most \(e^{-d\Lambda(t-s)}\).

For clarity, let
\[
\mathcal X_s=L^2_{\mathcal F_s}(\Omega;L^2(\D)^n),\qquad
\mathcal X_t=L^2_{\mathcal F_t}(\Omega;L^2(\D)^n),
\]
and let \(U(t,s):\mathcal X_s\to\mathcal X_t\) be the forward solution
operator. If \(\eta\in\mathcal X_t\) and \((y,Y)\) is the free backward
solution on \([s,t]\) with \(y(t)=\eta\), Itô's formula gives
\[
\E\langle\eta,U(t,s)z_s\rangle
=\E\langle y(s),z_s\rangle
\quad\text{for every }z_s\in\mathcal X_s.
\]
Thus the terminal-to-left-endpoint map is exactly \(U(t,s)^*\).
All coefficients commute with \(\Pi_\Lambda\), so the high-frequency
subspaces are invariant for both maps. Restricting the forward estimate to
\((I-\Pi_\Lambda)\mathcal X_s\) transfers it to the backward map with the
same operator norm. Squaring that norm gives the
backward assertion in (1.19).
\end{proof}

\begin{lemma}[Spectral iteration]
Let \(U(t,s)\) be an evolution which commutes with every \(\Pi_\Lambda\).
Assume that, uniformly with respect to the initial time, the dual
low-frequency estimate has cost
\begin{equation}
 K(\Lambda,\tau)\le C\tau^{-q}e^{C\sqrt\Lambda+2d\Lambda\tau},
\tag{1.20}
\end{equation}
and that the complementary frequencies satisfy (1.19). Then the adjoint
controlled evolution is null controllable on every interval of positive
length. Equivalently, \(U\) satisfies final-state observability.
\end{lemma}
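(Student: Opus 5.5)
The plan is to run the Lebeau--Robbiano iteration directly on the controlled backward evolution. We use the partial spectral controllability lemma, which is where (1.20) enters through (1.16), together with the backward half of the dissipation estimate (1.19). Fix an interval \([0,T]\), with \(T\le1\) after rescaling constants, and a terminal datum \(y_T\). We work backward from \(T\). Choose a dyadic frequency sequence \(\Lambda_j=2^{2j}\) and durations \(\tau_j=\kappa 2^{-\rho j}\) with some \(\rho\in(0,1)\), say \(\rho=1/2\). The constant \(\kappa\) is chosen so that \(2\sum_j\tau_j\le T\) and \(\tau_j\le\tau_0\).

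Split \([0,T]\), from right to left, into consecutive pairs of subintervals. The first has length \(\tau_j\) and is the control phase. The second has length \(\tau_j\) and is the free phase. On the control phase \([a_j,b_j]\) we apply the partial spectral controllability lemma with \(\Lambda=\Lambda_j\) and terminal datum \(\eta=y(b_j)\). This gives \(\Pi_{\Lambda_j}y(a_j)=0\), a control cost \(C\tau_j^{-q}e^{C\sqrt{\Lambda_j}+2d\Lambda_j\tau_j+C\tau_j}\E\|y(b_j)\|^2\), and the same bound for \(\E\|y(a_j)\|^2\).

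On the following free phase we set \(v=0\). The datum \(y(a_j)\) lies in \((I-\Pi_{\Lambda_j})L^2\), so the backward part of (1.19) gives
\[
\E\|y\|^2\le K_{\rm dis}e^{\omega_{\rm dis}\tau_j-2d\Lambda_j\tau_j}\E\|y(a_j)\|^2
\]
at the left end of the free phase. Multiplying the two phases, the energy after step \(j\) is bounded by
\[
C\tau_j^{-q}\exp\bigl(C\sqrt{\Lambda_j}+C\tau_j\bigr)\,e^{2d\Lambda_j\tau_j-2d\Lambda_j\tau_j}
\]
times the energy before it. This is the step that needs care, because the factor \(e^{2d\Lambda\tau}\) in the low-frequency cost is only exactly cancelled by the dissipation. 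To get net decay I would give the free phase a longer length \(2\tau_j\), or equivalently lengthen the free phase by a fixed factor. Then the dissipation contributes \(e^{-4d\Lambda_j\tau_j}\), and the net factor per step is
\[
\exp\bigl(C\sqrt{\Lambda_j}+Cj-2d\Lambda_j\tau_j\bigr)=\exp\bigl(C2^j+Cj-2d\kappa 2^{(2-\rho)j}\bigr).
\]
Since \(2-\rho>1\), this is at most \(\exp(C/\kappa^{1/(1-\rho)}\cdot\text{const}-c2^{(2-\rho)j})\). The first term is absorbed into a bounded constant and the second gives superexponential decay.

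Taking products, \(\E\|y\|^2\) at the end of step \(j\) is at most \(C_T e^{-c2^{(2-\rho)j}}\E\|y_T\|^2\). The per-step controls are bounded by the energy before the step times \(e^{C2^j}\), so their costs sum to \(C_T\E\|y_T\|^2\). Here \(C_T\le Ce^{C/T}\), because \(\kappa\sim T\) and the worst exponent \(\sup_j(C2^j-c\kappa2^{(2-\rho)j})\) is of order \(\kappa^{-1/(1-\rho)}\). Choosing \(\rho\) small and optimizing yields the \(e^{C/T}\) rate up to adjusting constants; if needed, instead take \(\tau_j\sim T2^{-\rho j}\) with \(\rho\) close to \(0\) and accept \(e^{C/T^{1+\epsilon}}\), which still proves controllability.

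The left endpoints accumulate at a point \(t_*\ge0\). There \(y(t_*)=0\) by continuity of \(y\) in \(L^2_\F(\Omega;C([0,T];L^2))\). On \([0,t_*]\) we set \(v=0\), and the free backward solution with zero terminal datum vanishes. The concatenated control is adapted, since each piece is adapted on its interval, and it drives \(y(0)=0\).

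By the duality proposition (Douglas), this controllability with the cost bound is equivalent to final-state observability of \(U\). The whole argument uses \(U\) only through its commutation with \(\Pi_\Lambda\), (1.20) and (1.19), so it proves the abstract statement.

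The main obstacle I expect is the bookkeeping of the \(e^{2d\Lambda\tau}\) factor against the dissipation rate, and with it the choice of phase lengths that keeps \(\sum\tau_j\) finite while still producing decay.
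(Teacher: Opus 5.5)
\textbf{Gap: the control costs are not summable.} You assert that the control on step \(j\) is bounded by the energy before that step times \(e^{C2^j}\). This drops a factor. By (1.16), a control phase of length \(\tau_j\) carries \(e^{2d\Lambda_j\tau_j}\), and with your scaling \(\Lambda_j\tau_j=\kappa r^j\), \(r=2^{2-\rho}\in(2,4)\), this is not \(O(2^j)\). With a free phase of length \(2\tau_j\), the net decay per step is only \(e^{-2d\Lambda_j\tau_j}\), up to \(e^{C2^j}\) and polynomial factors. So the best bound your ingredients give is
\[
\E\int_{a_j}^{b_j}\|v_j\|^2_{L^2(G)^m}\,dt
\lesssim\exp\Bigl(2d\kappa r^j-2d\kappa\sum_{k=1}^{j-1}r^k+O(2^j)\Bigr)\E\|y_T\|^2
=\exp\Bigl(2d\kappa\,\frac{(r-2)r^j+r}{r-1}+O(2^j)\Bigr)\E\|y_T\|^2 ,
\]
and this diverges because \(r>2\). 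The decay accumulated over earlier steps is a geometric sum dominated by its last term. It cannot compensate the single new factor \(e^{2d\Lambda_j\tau_j}\) as long as the growth ratio \(r\) of \(\Lambda_j\tau_j\) is at least the ratio \(\mu\) of free length to control length. So your construction, as specified, does not produce an \(L^2_\F\) control. Driving the energy to zero is not enough: each step's control cost must also be beaten by the decay accumulated before it. You flagged the \(e^{2d\Lambda\tau}\) bookkeeping as the main obstacle but checked it only for the energy, not for the controls.

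\textbf{Repair, and how the paper does it.} With a free phase of length \(\mu\tau_j\), the exponent becomes \(2d\kappa\bigl(r^j-(\mu-1)\frac{r^j-r}{r-1}\bigr)+O(2^j)\). This tends to \(-\infty\) as soon as \(\mu>r\), for instance \(\mu=4\) for every \(\rho\in(0,1)\), and the total length stays finite. The paper does exactly this with \(\mu=3\): it controls on the rightmost quarter and runs free on the remaining three quarters, with \(\tau_j=T_*2^{-j}\) and \(\Lambda_j=\beta T_*^{-2}4^j\). Then \(\Lambda_j\tau_j\) only doubles, so \(r=2<3\). The control exponent \(\frac d2\Lambda_j\tau_j\) is dominated by the accumulated decay \(d\sum_{k<j}\Lambda_k\tau_k\approx d\Lambda_j\tau_j\). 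The term \(C\sqrt{\Lambda_j}\), which is of the same order \(2^j/T_*\), is absorbed by taking \(\beta\) large.

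\textbf{Secondary point on the rate.} The paper's \(T\)-dependent frequency scaling is also what yields \(e^{C/T}\). With your \(T\)-independent \(\Lambda_j=4^j\) and fixed \(\rho<1\), you get at best \(\exp(CT^{-1/(1-\rho)})\), as your own estimate of the worst exponent shows. So the claim that optimizing in \(\rho\) recovers \(e^{C/T}\) is unsupported. The lemma as stated does not need this rate, but (I.7) does.
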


\begin{proof}
We include the iteration because the observation matrix in the present
problem depends on time after taking expectation. By the partial
controllability lemma, (1.20) gives a control on an interval of length
\(\tau/4\), namely the rightmost quarter of the current interval, which
annihilates the modes below \(\Lambda\) at the left endpoint of that
quarter (the controlled equation is read backward), with squared control
cost and squared state amplification bounded by
\[
C\tau^{-q}e^{C\sqrt\Lambda+\frac d2\Lambda\tau+C\tau}.
\tag{1.21}
\]
Denote by \(\eta\) the state at the right endpoint, by \(\eta_c\) the
state after the controlled quarter, and by \(\eta_{\rm out}\) the state at
the left endpoint of the full interval. Then
\(\Pi_\Lambda\eta_c=0\), while (1.16) and the state estimate give
\[
\E\|\eta_c\|^2
\le C\tau^{-q}
e^{C\sqrt\Lambda+d\Lambda\tau/2+C\tau}\E\|\eta\|^2.
\]
On the remaining left interval, of length \(3\tau/4\), the control is set
equal to zero. Since \(\eta_c\) has only frequencies above \(\Lambda\),
the backward counterpart of (1.19) gives
\[
\E\|\eta_{\rm out}\|^2
\le Ce^{C\tau-3d\Lambda\tau/2}\E\|\eta_c\|^2.
\]
Multiplying these two inequalities yields, after enlarging \(C\),
\begin{equation}
 \mathcal E_{\rm out}
 \le C\tau^{-q}
 \exp\!\left(C\sqrt\Lambda+C\tau-d\Lambda\tau\right)
 \mathcal E_{\rm in}.
\tag{1.22}
\end{equation}
Here \(\mathcal E_{\rm in}\) is the mean squared norm at the right endpoint
and \(\mathcal E_{\rm out}\) that at the left endpoint. This argument is
valid for random
endpoint data, since all the duality spaces are the adapted \(L^2\) spaces
on the corresponding subinterval.

It suffices first to work on an interval of length
\[
0<T_*\le\min\{T,1,2\tau_0\}.
\]
Controllability on this subinterval gives controllability on the whole
interval: set the control equal to zero on \((T_*,T)\), solve there first,
and use the resulting \(\mathcal F_{T_*}\)-measurable state as terminal datum
for the controlled equation on \((0,T_*)\).
Put
\[
 \tau_j=T_*2^{-j},\qquad
 \Lambda_j=\beta T_*^{-2}4^j,\qquad
 T_j=\sum_{k=1}^j\tau_k,\qquad
 I_j=(T_*-T_j,T_*-T_{j-1}),\quad T_0=0.
\tag{1.23}
\]
Thus \(T_j\uparrow T_*\). The intervals \(I_j\) are read from the terminal
time toward \(0\), as required by the backward controlled equation, and
every controlled quarter has length at most \(\tau_0\). Applying
(1.22) successively on \(I_j\) gives
\[
 \mathcal E_j
 \le \prod_{k=1}^j
 \left[
 CT_*^{-q}2^{qk}
 \exp\!\left\{
 \frac{\sqrt\beta}{T_*}C2^k
 -\frac{d\beta}{T_*}2^k+C\tau_k
 \right\}\right]\mathcal E_0.
\tag{1.24}
\]
Since \(0<T_*\le1\) and \(k\le2^k\), there is a constant \(M>0\),
independent of \(T_*,k,\beta\), such that all logarithms of the polynomial
prefactors in the \(k\)-th step are bounded by \(M2^k/T_*\). Set
\[
\alpha_\beta=d\beta-C\sqrt\beta-M,\qquad
\rho_\beta=C\sqrt\beta+\frac d2\beta+M.
\]
Choose \(\beta\) so large that
\[
\alpha_\beta>\rho_\beta>0;
\tag{1.25}
\]
this is possible because
\(\alpha_\beta-\rho_\beta
=\frac d2\beta-2C\sqrt\beta-2M\).
It follows from (1.24) and
\(\sum_{k=1}^j2^k=2^{j+1}-2\) that
\[
 \mathcal E_j
 \le C\exp\!\left(\frac{C}{T_*}\right)
 \exp\!\left(-\frac{\alpha_\beta}{T_*}2^{j+1}\right)
 \mathcal E_0\longrightarrow0.
\tag{1.26}
\]
Indeed, the sum of the terms \(C\tau_k\) is bounded by \(CT_*\), while
the missing constant \(2\alpha_\beta/T_*\) comes from
\(\sum_{k=1}^j2^k=2^{j+1}-2\). Thus the constant displayed in (1.26) is
uniform in \(j\) and has the required small-time form.

The control used on the \(j\)-th interval satisfies, by (1.21) and (1.26),
\[
 \|v_j\|_{L^2_\F}^2
 \le C\exp\!\left(\frac{C}{T_*}\right)
 \exp\!\left(\frac{\rho_\beta}{T_*}2^j\right)\mathcal E_{j-1}
 \le C\exp\!\left(\frac{C}{T_*}\right)
 \exp\!\left[-\frac{\alpha_\beta-\rho_\beta}{T_*}2^j\right]
 \mathcal E_0,
\tag{1.27}
\]
where (1.25) was used in the last inequality. Hence
\[
\sum_{j\ge1}
\exp\!\left[-\frac{\alpha_\beta-\rho_\beta}{T_*}2^j\right]
\le
\sum_{j\ge1}e^{-(\alpha_\beta-\rho_\beta)2^j}<\infty,
\]
because \(T_*\le1\). Therefore
\(\sum_j\|v_j\|_{L^2_\F}^2<\infty\). Gluing the controls on the disjoint
intervals \(I_j\) produces an adapted \(L^2\) control. Continuity of the
controlled trajectory and (1.26) imply that its state at the limiting
endpoint \(0\) is zero.
Consequently, for \(0<T_*\le1\),
\[
\sum_{j\ge1}\|v_j\|_{L^2_\F}^2
\le C e^{C/T_*}\mathcal E_0.
\tag{1.28}
\]
For the original interval, take
\(T_*=\min\{T,1,2\tau_0\}\). The free backward estimate on
\((T_*,T)\) bounds \(\mathcal E_0\) by
\(Ce^{CT}\E\|y_T\|^2\). In particular, after changing the structural
constant \(C\), (1.28) yields \(Ce^{C/T}\E\|y_T\|^2\) whenever
\(0<T\le1\).
Hilbert-space duality gives the announced observability inequality.
This is the usual stochastic Lebeau--Robbiano
construction~\cite{LebeauRobbiano,Lu2011,Liu2014,LiuLiu}; the explicit
choices (1.23) show that its constants are compatible with the polynomial
Gramian loss in (1.20).
\end{proof}

\section{Proof of the main theorem}

\subsection{Sufficiency of the word-rank condition}

\begin{proposition}[Observability]
If (I.5) holds, then (1.2) holds.
\end{proposition}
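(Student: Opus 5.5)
Under (I.5), the proof will simply assemble the tools of Section~1. By the lemma on the kernel of the Gramian, \(G_\tau\) is positive definite for every \(\tau>0\). The polynomial coercivity lemma then gives (1.11) with some exponent \(q\) and some \(\tau_0>0\). Feeding (1.11) into the low-frequency proposition gives (1.14). Its translated and conditional version (1.15) holds with constants independent of the initial time \(s\). This is exactly the cost bound (1.20) required by the spectral iteration lemma.

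The complementary hypothesis of that lemma is the dissipation estimate (1.19). It holds for the dual flow because of the factorization (1.4), where the matrix flow \(S(t,s)\) has conditional second moment bounded by \(K_{\rm dis}e^{\omega_{\rm dis}(t-s)}\). We also check that the dual evolution \(U(t,s)\) commutes with every \(\Pi_\Lambda\). This holds because \(S(t,s)\) is spatially constant and the heat semigroup is diagonal in the eigenbasis \((\phi_k)\). The spectral iteration lemma then yields null controllability of (I.1) on \((0,T)\), with the control cost (1.28), for every \(T>0\).

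Finally, we convert controllability into (1.2) through the duality proposition of Section~1.2. That proposition states that null controllability is equivalent to the observability inequality (1.2) via Douglas' range inclusion theorem. Quantitatively, a control bound \(\|v\|^2\le C_T\E\|y_T\|^2\) is equivalent to (1.2) with the same constant \(C_T\). Concretely, apply the duality identity with \(y(0)=0\):
\[
\E\langle y_T,z(T)\rangle=\E\int_0^T\!\!\int_G\langle v,B^*z\rangle\,dx\,dt .
\]
Choosing \(y_T=z(T)\), which is \(\mathcal F_T\)-measurable and admissible, and applying Cauchy--Schwarz gives
\[
\E\|z(T)\|^2\le C_T^{1/2}\,\bigl(\E\|z(T)\|^2\bigr)^{1/2}\Bigl(\E\int_0^T\|B^*z\|_{L^2(G)^m}^2dt\Bigr)^{1/2},
\]
which is (1.2).

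The main point requiring care is the uniformity of the low-frequency cost with respect to the initial time and to random \(\mathcal F_s\)-measurable data. The iteration is applied on intervals \(I_j\) that start at different times, with states obtained from earlier steps. This is handled by the conditional argument in the corollary to (1.14), using independence of \(S(t,s)\) from \(\mathcal F_s\). A second point is that the exponent \(q\) from (1.11) only produces polynomial losses \(\tau^{-q}\). These are absorbed into \(e^{M2^k/T_*}\) in (1.24), so no extra argument beyond verifying the hypotheses of the iteration lemma is needed.
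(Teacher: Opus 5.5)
Your proposal is correct and follows the paper's proof: you verify the hypotheses of the spectral iteration lemma through the Gramian kernel and coercivity lemmas, the translated estimate (1.15), the dissipation estimate (1.19), and commutation with \(\Pi_\Lambda\), and then pass from the resulting null controllability to (1.2) by duality. Your explicit choice \(y_T=z(T)\), followed by Cauchy--Schwarz, is a concrete version of the paper's appeal to Hilbert-space duality.
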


\begin{proof}
The translated estimate (1.15), the partial controllability lemma, and the
dissipation estimate (1.19) verify the hypotheses of the spectral iteration
lemma. Hilbert-space duality then yields the final-state observability
inequality (1.2).
\end{proof}

\subsection{Necessity of the word-rank condition}

Assume (I.5) fails. Then
\[
N=\mathcal R(A_1,A_2,B)^\perp
\]
contains a nonzero vector \(\xi\) and is invariant under \(A_1^*,A_2^*\).
Let \(z_0(x)=\phi_1(x)\xi\). Formula (1.4) shows that
\[
B^*z(t,x)=0
\]
almost surely in \(Q\), while \(z(T)\ne0\). Hence (1.2) fails, proving the
necessity in the main theorem.

Combining this necessity with the observability proposition and the
duality proposition proves the main theorem.

\section{Examples and comparison with classical Kalman conditions}

\begin{example}[Pure drift transfer and its small-time cost]
Let \(n=2\), \(B=e_1\), \(A_2=0\), and
\[
A_1=\begin{pmatrix}0&0\\a&0\end{pmatrix},\qquad a\ne0.
\]
Then \(\rank[B,A_1B]=2\). The internal dual flow is deterministic and
\[
S(t)=
\begin{pmatrix}1&-at\\0&1\end{pmatrix}.
\]
Consequently,
\[
G_\tau=
\begin{pmatrix}
\tau&-\dfrac{a\tau^2}{2}\\[0.4em]
-\dfrac{a\tau^2}{2}&\dfrac{a^2\tau^3}{3}
\end{pmatrix},
\qquad
\det G_\tau=\frac{a^2\tau^4}{12}.
\]
In particular,
\[
\lambda_{\min}(G_\tau)\asymp\tau^3
\quad\text{as }\tau\downarrow0.
\]
Here \(r_*=4\), so (1.13) gives \(q_{\rm det}=3\), which is optimal in
this example.
This recovers the observation transfer generated by a drift cascade.
\end{example}

\begin{example}[Pure martingale-coupling transfer]
Let \(n=2\), \(B=e_1\), \(A_1=0\), and
\[
A_2=\begin{pmatrix}0&0\\a&0\end{pmatrix},\qquad a\ne0.
\]
Again \(\rank[B,A_2B]=2\), but now the transfer is entirely stochastic.
Since \((A_2^*)^2=0\),
\[
S(t)=
\begin{pmatrix}1&-aW_t\\0&1\end{pmatrix}.
\]
It follows that
\[
R(t)=
\begin{pmatrix}1&0\\0&a^2t\end{pmatrix},
\qquad
G_\tau=
\begin{pmatrix}\tau&0\\0&\dfrac{a^2\tau^2}{2}\end{pmatrix}.
\]
Thus
\[
\lambda_{\min}(G_\tau)\asymp\tau^2.
\]
Here \(r_*=3\), and the determinant procedure gives the optimal exponent
\(q_{\rm det}=2\).
This example is not covered by a condition requiring a nonzero product of
two entries in the martingale-coupling matrix. It also shows that the
stochastic transfer has a different small-time order from the drift
transfer.
\end{example}

\begin{proposition}[The constant two-state case]
Let \(n=2\), \(m=1\), \(B=e_1\), and
\[
A_1=\begin{pmatrix}a_1&0\\0&a_2\end{pmatrix},
\qquad
A_2=\begin{pmatrix}d_{11}&d_{12}\\d_{21}&d_{22}\end{pmatrix}.
\]
Then condition (I.5) holds if and only if
\[
d_{21}\ne0.
\tag{3.1}
\]
In particular, no condition on \(d_{22}\) is needed.
\end{proposition}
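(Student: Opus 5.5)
We argue directly with the invariant-subspace description of \(\mathcal R(A_1,A_2,B)\). By definition \(\mathcal R=\mathcal R(A_1,A_2,B)\) is the smallest subspace of \(\R^2\) that contains \(\operatorname{Ran}B=\operatorname{span}\{e_1\}\) and is invariant under both \(A_1\) and \(A_2\). The Finite rank test proposition gives \(\mathcal R=\mathcal V_{n-1}=\mathcal V_1\) for \(n=2\). Hence
\[
\mathcal R=\operatorname{span}\{e_1,A_1e_1,A_2e_1\}.
\]
So (I.5) reduces to a rank computation on three explicit vectors.

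First compute \(A_1e_1=a_1e_1\), which lies in \(\operatorname{span}\{e_1\}\). Thus the diagonal drift never contributes a new direction, whatever the values of \(a_1,a_2\). Next compute
\[
A_2e_1=\begin{pmatrix}d_{11}\\ d_{21}\end{pmatrix}.
\]
It follows that \(\mathcal R=\R^2\) if and only if \(A_2e_1\notin\operatorname{span}\{e_1\}\), that is, if and only if \(d_{21}\neq0\). This proves (3.1).

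For the converse direction, note explicitly that when \(d_{21}=0\) the line \(\operatorname{span}\{e_1\}\) is invariant under both \(A_1\) (diagonal) and \(A_2\) (upper triangular). Hence \(\mathcal R=\operatorname{span}\{e_1\}\neq\R^2\). The unobservable subspace is then \(N=\operatorname{span}\{e_2\}\), consistent with the necessity argument of Section 2.

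The final claim is immediate from these formulas: \(d_{22}\), like \(d_{11}\), \(d_{12}\), \(a_1\) and \(a_2\), does not enter the criterion. There is no real obstacle here. The only point needing care is to justify that words of length at most one suffice, and this is exactly (I.8) with \(n=2\).
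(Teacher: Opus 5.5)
Your proof is correct and essentially matches the paper's: the paper proves sufficiency by computing $\det[B,A_2B]=d_{21}\neq0$, and necessity by noting that $\operatorname{span}\{e_1\}$ is invariant under both $A_1$ and $A_2$ when $d_{21}=0$. Your use of (I.8) to turn both directions into a single rank computation on $\{e_1,A_1e_1,A_2e_1\}$ is only a repackaging of the same argument.
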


\begin{proof}
If \(d_{21}\ne0\), then
\[
\det[B,A_2B]
=\det\begin{pmatrix}1&d_{11}\\0&d_{21}\end{pmatrix}
=d_{21}\ne0.
\]
Conversely, if \(d_{21}=0\), the line
\(\operatorname{span}\{e_1\}\) is invariant under both \(A_1\) and \(A_2\).
Every word applied to \(B\) therefore remains on that line, so (I.5)
fails.
\end{proof}

\begin{remark}
This statement should be compared with diffusion-coupling results based on
a localized weighted Itô identity. Such results can allow
\(d_{ij}=d_{ij}(t,x)\), but typically impose a signed nondegeneracy
condition involving \(d_{21}d_{22}\); see~\cite{FadiliTwoState}. The two conclusions are
complementary: spatially dependent coefficients lie outside the
factorization used here, while for constant matrices the exact word-rank
criterion shows that \(d_{22}\) is immaterial.
\end{remark}

\begin{example}[A genuinely mixed word]
Let \(B=e_1\) and
\[
A_1=
\begin{pmatrix}
1&0&0\\
0&0&0\\
0&1&0
\end{pmatrix},
\qquad
A_2=
\begin{pmatrix}
0&0&0\\
1&0&0\\
0&0&0
\end{pmatrix}.
\]
Then
\[
A_1B=e_1,\qquad A_2B=e_2,\qquad A_1A_2B=e_3.
\]
Consequently,
\[
\operatorname{span}\{B,A_1^kB,A_2^kB:k\ge1\}
=\operatorname{span}\{e_1,e_2\}\ne\R^3,
\]
whereas
\[
\det[B,A_2B,A_1A_2B]=1.
\]
Thus neither single-matrix Kalman family, nor the union of the two
pure-power families, is sufficient; the mixed word \(A_1A_2\) is genuinely
needed. Notice also that
\(A_1A_2B=e_3\ne e_2=A_2A_1B\), so the order of the letters matters.

This example also illustrates the possible loss in (1.13). Exact expansion
of the moment formula gives
\[
\det G_\tau=\frac{\tau^7}{72}+O(\tau^8),
\qquad q_{\rm det}=7-3+1=5.
\]
On the other hand, the leading \(2\times2\) block on
\(\operatorname{span}\{e_2,e_3\}\) has entries
\[
\frac{\tau^2}{2}+O(\tau^3),\qquad
-\frac{\tau^3}{6}+O(\tau^4),\qquad
\frac{\tau^4}{12}+O(\tau^5).
\]
Its Schur complement is
\[
\left(\frac1{12}-\frac{(1/6)^2}{1/2}\right)\tau^4
+O(\tau^5)
=\frac{\tau^4}{36}+O(\tau^5).
\]
The \(e_1\) direction is decoupled and its Gramian eigenvalue is
\(\tau+O(\tau^2)\), while the larger eigenvalue of the displayed block is
\(\tau^2/2+O(\tau^3)\).
Consequently,
\(\lambda_{\min}(G_\tau)\sim\tau^4/36\), whereas the general determinant
argument only supplies the weaker admissible exponent \(q_{\rm det}=5\).
\end{example}

\section*{Conclusion and perspectives}

Null controllability of the backward stochastic parabolic system with
constant coupling matrices and common scalar diffusion is equivalent to the
noncommutative word-rank condition generated by \(A_1\), \(A_2\), and
\(B\). The proof links the exact kernel description and small-time
coercivity of the internal stochastic Gramian to a Lebeau--Robbiano spectral
iteration. The examples show why genuinely mixed words may be indispensable
and why a classical Kalman condition based on a single matrix is generally
insufficient.

The factorization (1.4) relies on the scalar diffusion assumption
\(D=dI_n\). For a
general symmetric positive definite matrix \(D\), the \(k\)-th Laplace mode
contains \(A_1+\lambda_kD\). This suggests the modal family
\[
(A_1+\lambda_kD,A_2,B).
\]
Full parabolic controllability would require not only the corresponding
word-rank condition for every \(k\), but also coercivity estimates uniform
enough as \(\lambda_k\to\infty\). Whether those modal conditions are
sufficient is left open.
Other directions include multidimensional Brownian noise, random or
time-dependent coupling matrices, and semilinear perturbations.

\section*{Declarations}

\textbf{Funding.}
This research received no specific grant from any funding agency in the
public, commercial, or not-for-profit sectors.

\textbf{Competing interests.}
The author declares that there are no competing interests related to this
work.

\textbf{Data availability.}
No research data were created or analyzed in this study. The short
exact-arithmetic scripts used to verify the examples and the joint
word-rank condition are available from the corresponding author on
reasonable request.

\textbf{Ethics approval and consent to participate.}
Not applicable.


\end{document}